\begin{document}
\newtheorem{theorem}{Theorem}[section]
\newtheorem{lemma}[theorem]{Lemma}
\newtheorem{definition}[theorem]{Definition}
\newtheorem{conjecture}[theorem]{Conjecture}
\newtheorem{proposition}[theorem]{Proposition}
\newtheorem{algorithm}[theorem]{Algorithm}
\newtheorem{corollary}[theorem]{Corollary}
\newtheorem{observation}[theorem]{Observation}
\newtheorem{claim}[theorem]{Claim}
\newtheorem{problem}[theorem]{Open Problem}
\newtheorem{remark}[theorem]{Remark}
\newcommand{\noin}{\noindent}
\newcommand{\ind}{\indent}
\newcommand{\om}{\omega}
\newcommand{\I}{\mathcal I}
\newcommand{\ppp}{\mathfrak P}
\newcommand{\N}{{\mathbb N}}
\newcommand{\LL}{\mathbb{L}}
\newcommand{\R}{{\mathbb R}}
\newcommand{\E}[1]{\mathbb{E}\left[#1 \right]}
\newcommand{\V}{\mathbb Var}
\newcommand{\Prob}{\mathbb{P}}
\newcommand{\eps}{\varepsilon}

\newcommand{\Tv}{P}

\newcommand{\mT}{\mathcal{T}}
\newcommand{\mS}{\mathcal{S}}
\newcommand{\mA}{\mathcal{A}}
\newcommand{\mB}{\mathcal{B}}

\newcommand{\Cyc}[1]{\mathrm{Cyc}\left(#1\right)}
\newcommand{\Seq}[1]{\mathrm{Seq}\left(#1\right)}
\newcommand{\Mul}[1]{\mathrm{Mul}\left(#1\right)}
\newcommand{\Mulo}[1]{\mathrm{Mul}_{>0}\left(#1\right)}
\newcommand{\Set}[1]{\mathrm{Set}\left(#1\right)}
\newcommand{\Setd}[1]{\mathrm{Set}_{d}\left(#1\right)}

\newcommand{\dg}{d_{G}}
\newcommand{\de}{d_{E}}

\newcommand{\real}{\ensuremath {\mathbb R} }
\newcommand{\ent}{\ensuremath {\mathbb Z} }
\newcommand{\nat}{\ensuremath {\mathbb N} }

\newcommand{\RG} {\ensuremath{\mathscr G(n,r)}}
\newcommand{\RGuv} {\ensuremath{\widetilde{\mathscr G}(n,r)}}
\newcommand{\RGR} {\ensuremath{\widetilde{\mathscr G}_{R,u,v}(n,r)}}
\newcommand{\SR} {\ensuremath{\mathcal S}}

\title{Burning graphs---a probabilistic perspective}

\author{Dieter Mitsche}
\address{Universit\'{e} de Nice Sophia-Antipolis, Laboratoire J-A Dieudonn\'{e}, Parc Valrose, 06108 Nice cedex 02}
\email{\texttt{dmitsche@unice.fr}}

\author{Pawe\l{} Pra\l{}at}
\address{Department of Mathematics, Ryerson University, Toronto, ON, Canada}
\email{\tt pralat@ryerson.ca}

\author{Elham Roshanbin}
\address{Department of Mathematics and Statistics, Dalhousie University, Halifax, NS, Canada}
\email{\tt e.roshanbin@dal.ca}

\thanks{The first author acknowledges support of the PROCOPE-DAAD project RanConGraph (ref. 57134837). The second author acknowledges support from NSERC and Ryerson University.}


\begin{abstract}
In this paper, we study a graph parameter that was recently introduced, the burning number, focusing on a few probabilistic aspects of the problem. The original burning number is revisited and analyzed for binomial random graphs $\mathcal{G}(n,p)$, random geometric graphs, and the Cartesian product of paths. Moreover, new variants of the burning number are introduced in which a burning sequence of vertices is selected according to some probabilistic rules. We analyze these new graph parameters for paths.
\end{abstract}

\maketitle

\section{Introduction and results}

In this paper, we study a graph parameter, the burning number, that was recently introduced as a simple model of spreading social influence~\cite{BJR}. Our focus is on a few probabilistic aspects of this problem. Randomness is coming from two possible sources: the original burning number is investigated for random graphs, and some new variants are introduced in which some probabilistic rules are introduced, replacing deterministic ones that are embedded in the original model. The problem is inspired by many well-known processes on graphs and their probabilistic counterparts, including \emph{firefighter}~\cite{fire1, fire2} (see the survey~\cite{fire3} for an overview of many deterministic results), \emph{cleaning process}~\cite{brush1, brush2, brush3}, \emph{bootstrap percolation} (see the survey~\cite{bootstrap}), and synchronous \emph{rumour-spreading} (see for example~\cite{abbas} and the references therein).

\bigskip

Let us start with the original process introduced recently in~\cite{BJR}. Given a finite, simple, undirected graph $G$, the burning process on $G$ is a discrete-time process defined as follows. Initially, at time $t=0$ all vertices are unburned. At each time step $t \geq 1$, an unburned vertex is chosen to burn (if such a vertex is still available); if a vertex is burned, then it remains in that state until the end of the process. Once a vertex is burned in round $t$, in round $t+1$ each of its unburned neighbours becomes burned. The process ends when all vertices of $G$ are burned. The \emph{burning number} of a graph $G$, denoted by $b(G)$, is the minimum number of rounds needed for the process to end. It is obvious that for every connected graph $G$ we have that $b(G) \le D(G) + 1$, where $D(G)$ is the diameter of $G$.

\subsection{Binomial random graphs} Our first result is for random graphs. The \emph{binomial random graph} $\mathcal{G}(n,p)$ is defined as a random graph with vertex set $[n]=\{1,2,\dots, n\}$ in which a pair of vertices appears as an edge with probability $p$, independently for each pair of vertices. As typical in random graph theory, we shall consider only asymptotic properties of $\mathcal{G}(n,p)$ as $n\rightarrow \infty$, where $p=p(n)$ may and usually does depend on $n$. See~\cite{JLR} for more details on the model.

Throughout the paper, we use the following standard notation for the asymptotic behaviour of sequences of non-negative numbers $a_n$ and $b_n$: $a_n=O(b_n)$ if $\limsup_{n\to\infty}a_n/b_n\leq C< \infty$;  $a_n=\Omega(b_n)$ if $b_n=O(a_n)$; $a_n=\Theta(b_n)$ if $a_n=O(b_n)$ and $a_n=\Omega(b_n)$; $a_n=o(b_n)$ if $\lim_{n\to\infty} a_n/b_n = 0$, and $a_n=\omega(b_n)$ if $b_n=o(a_n)$. We also use the notation $a_n \ll b_n$ for $a_n=o(b_n)$ and $a_n \gg b_n$ for $b_n=o(a_n)$. Finally,  a sequence of events $H_n$ holds \emph{asymptotically almost surely} (\emph{a.a.s.}) if $\lim_{n\to\infty}\Pr(H_n)=1$. All logarithms in this paper are natural logarithms.

\bigskip

Now, we are ready to state the main result for binomial random graphs.

\medskip

\begin{theorem}~\label{thm:mainRandom}
Let $G \in \mathcal{G}(n,p)$, $\eps >0$, and $\omega = \omega(n) \to \infty$ as $n \to \infty$ but $\omega = o(\log \log n)$. 

Suppose first that 
$$
d=d(n)=p(n-1) \gg \log n \textrm{ \ \ \ and \ \ \  } p \le 1-(\log n + \log \log n + \omega)/n.
$$  
Let $i \ge 2$ be the smallest integer such that
$$
d^i/n - 2 \log n \to \infty.
$$
Then, the following property holds a.a.s. 
\begin{equation}\label{eq:i}
b(G) = 
\begin{cases}
i & \text{ if } \ \ d^{i-1}/n \ge (1+\eps) \log n\\
i \text{ or } i+1 & \text{ if } \ \ (1-\eps) \log d \le d^{i-1}/n < (1+\eps) \log n\\
i+1 & \text{ if } \ \ d^{i-1}/n < (1-\eps) \log d.
\end{cases}
\end{equation}

If
$$
1-(\log n + \log \log n + \omega)/n < p \le 1-(\log n + \log \log n - \omega)/n,
$$  
then a.a.s. 
\begin{equation}\label{eq:ii}
b(G) = 2 \text{ or } 3.
\end{equation}

Finally, if
$$
p > 1-(\log n + \log \log n - \omega)/n,
$$  
then a.a.s.
\begin{equation}\label{eq:iii}
b(G) = 2.
\end{equation}
\end{theorem}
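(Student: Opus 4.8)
\medskip
\noindent\textbf{Proof strategy for \eqref{eq:iii}.}
The plan is to translate the claim about the burning number into a statement about the minimum degree of the complementary graph, and then read that statement off from a first/second moment computation.

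First I would establish the deterministic fact that, for a graph $G$ on $n\ge 2$ vertices, $b(G)=2$ if and only if $\Delta(G)\ge n-2$. One inclusion is immediate: after the first round exactly one vertex is burned, so $b(G)\ge 2$ whenever $n\ge 2$. For the other, the picture is that a vertex ignited in round $j$ of an optimal length-$k$ sequence has $k-j$ further rounds in which its fire spreads, so $b(G)\le k$ precisely when there are vertices $v_1,\dots,v_k$ with $\bigcup_{j=1}^{k} B(v_j,k-j)=V(G)$; for $k=2$ this reads $N[v_1]\cup\{v_2\}=V(G)$, which is possible exactly when some $v_1$ satisfies $|V(G)\setminus N[v_1]|\le 1$, i.e. $\deg_G(v_1)\ge n-2$. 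Concretely: if $\Delta(G)\ge n-2$, burn a maximum-degree vertex $v_1$ in round $1$; in round $2$ its (at least $n-2$) neighbours ignite and we burn the at most one remaining vertex, finishing in two rounds. Conversely, if $\Delta(G)\le n-3$ then after two rounds at most $(n-2)+1<n$ vertices are burned, so $b(G)\ge 3$.

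Next, set $q=q(n)=1-p$, so that the complement satisfies $\bar G\in\mathcal{G}(n,q)$ and, by hypothesis, $nq<\log n+\log\log n-\omega$. Since $\Delta(G)\ge n-2$ is equivalent to $\delta(\bar G)\le 1$, it suffices to prove that a.a.s. $\bar G$ has a vertex of degree at most $1$. As ``$\delta(\bar G)\le 1$'' is a decreasing property of $\bar G$, the standard monotone coupling of binomial random graphs lets me assume the extremal case $nq=\log n+\log\log n-\omega$. Let $X=X_0+X_1$, where $X_0$ counts isolated vertices of $\bar G$ and $X_1$ counts vertices of degree exactly $1$. A routine estimate of $\E{X_0}=n(1-q)^{n-1}$ and $\E{X_1}=n(n-1)q(1-q)^{n-2}$ gives
\[
\E{X_0}=\frac{e^{\omega+o(1)}}{\log n}=o(1),\qquad \E{X_1}=e^{\omega+o(1)}\to\infty,
\]
so $\E{X}\to\infty$. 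A second-moment argument then finishes the job: for distinct $u,v$ the events $\{\deg_{\bar G}u\le 1\}$ and $\{\deg_{\bar G}v\le 1\}$ are essentially independent, whence $\mathrm{Var}(X)=o\big((\E{X})^2\big)$ and a.a.s. $X\ge 1$. (Equivalently, one may invoke the classical threshold for minimum degree $2$ in $\mathcal{G}(n,q)$ -- see, e.g., \cite{JLR} -- which yields $\delta(\bar G)\le 1$ a.a.s. as soon as $nq=\log n+\log\log n+c_n$ with $c_n\to-\infty$.) Hence a.a.s. $\Delta(G)\ge n-2$, i.e. $b(G)=2$.

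The delicate point -- the one place where the hypotheses really bite -- is that the expected number of \emph{isolated} vertices of $\bar G$ tends to $0$ here (because $\omega=o(\log\log n)$ forces $e^{\omega}/\log n\to0$), so the argument genuinely has to carry the degree-exactly-one vertices, and the second-moment concentration has to be run for a count whose mean grows only like $e^{\omega}$, i.e. subpolynomially in $n$. Everything else is routine bookkeeping.
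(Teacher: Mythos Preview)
Your proposal is correct and follows the same route as the paper: both reduce $b(G)=2$ to the condition $\delta(\bar G)\le 1$ (equivalently $\Delta(G)\ge n-2$) and then invoke the threshold for minimum degree~$2$ in $\mathcal{G}(n,q)$ with $q=1-p$. The only difference is that where the paper simply cites this threshold as well known, you sketch the first/second-moment computation that underlies it---including the nice observation that here the contribution from isolated vertices of $\bar G$ is negligible and the degree-one vertices carry the argument.
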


\subsection{Random geometric graphs}  Our next result deals with random geometric graphs. Given a positive integer $n$, and a non-negative real $r$, we consider a \emph{random geometric graph} $G\in\RG$ defined as follows. The vertex set $V$ of $G$ is obtained by choosing $n$ points independently and uniformly at random in the square $\SR = [0,1]^2$. Note that, with probability $1$, no point in $\SR$ is chosen more than once, and thus we may assume $|V|=n$. For notational purposes, we identify each vertex $v \in V$ with its corresponding geometric position $v=(v_x,v_y)\in\SR$, where $v_x$ and $v_y$ denote the usual $x$- and $y$-coordinates in $\SR$. Finally, the edge set of $G\in\RG$ is constructed by connecting each pair of vertices $u$ and $v$ by an edge if and only if $d_E(u,v)\le r$, where $d_E$ denotes the Euclidean distance in $\SR$. We also use the graph distance $d_G(u,v)$ to denote the number of edges on a shortest path between $u$ and $v$. As in the case of binomial random graphs, we shall consider only asymptotic properties of $\RG$ as $n\rightarrow \infty$, where $r=r(n)$ may and usually does depend on $n$. See~\cite{Penrose} for more details on the model.

\medskip

It is well known that  $r_c=\sqrt{\log n/(\pi n)}$ is a sharp threshold function for the connectivity of a random geometric graph (see~e.g.~\cite{Penrose97,Goel05}). This means that for every $\varepsilon>0$, if $r\le(1-\varepsilon)r_c$, then $\RG$ is a.a.s.\ disconnected, whilst if $r\ge(1+\varepsilon)r_c$, then it is a.a.s.\ connected. We have the following theorem:

\begin{theorem}\label{thm:mainRGG}
Let $G\in\RG$ with $r \geq Cr_c$ for $C$ being a sufficiently large constant. Then, a.a.s.,
$$b(G)=\Theta\left(r^{-2/3}\right).$$
\end{theorem}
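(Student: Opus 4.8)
The plan is to compare the graph metric of $G\in\RG$ with the Euclidean metric rescaled by $1/r$, and thereby turn the computation of $b(G)$ into a Euclidean disk-covering problem. The key preliminary step is to show that, a.a.s., there are constants $\alpha,\beta>0$ (depending only on $C$) with
$$
\frac{d_E(u,v)}{r}\ \le\ d_G(u,v)\ \le\ \alpha\,\frac{d_E(u,v)}{r}+\beta \qquad\text{for all }u,v\in V,
$$
together with the fact that a.a.s.\ every point of $\SR$ lies within Euclidean distance $O(r_c)=O(r/C)$ of some vertex. The lower bound on $d_G$ is trivial, since adjacent vertices are at Euclidean distance at most $r$. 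For the upper bound I would tessellate $\SR$ into square cells of side $\Theta(r)$ small enough that any two vertices lying in the same cell or in two cells sharing a corner are adjacent; a union bound shows that a.a.s.\ every cell is nonempty, because there are $\Theta(r^{-2})=O(n)$ cells and each is empty with probability at most $\exp(-\Theta(nr^2))=\exp(-\Theta(C^2\log n))$, so the expected number of empty cells tends to $0$ once $C$ is a large enough constant. Using one vertex per cell, a chain of cells shadowing the segment from $u$ to $v$ yields a $u$--$v$ path with $O(d_E(u,v)/r)+O(1)$ edges; the same tessellation at scale $\Theta(r_c)$ gives the stated density statement (and, incidentally, re-proves connectivity). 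These estimates for random geometric graphs above the connectivity threshold are essentially known, and I expect this verification --- getting a clean two-sided comparison uniformly over all pairs, including near the boundary of the square --- to be the main technical obstacle; everything afterwards is elementary geometry.

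For the lower bound $b(G)=\Omega(r^{-2/3})$ I would use the standard reformulation that $b(G)\le k$ if and only if there is a sequence of vertices $v_1,\dots,v_k$ with $\bigcup_{j=1}^k B_G(v_j,k-j)=V$, where $B_G(v,\rho):=\{u\in V: d_G(u,v)\le\rho\}$. If $b(G)=k$, fix such a sequence; by the lower bound on $d_G$, $B_G(v_j,k-j)$ is contained in the Euclidean disk of radius $(k-j)r$ about $v_j$, so these $k$ disks contain every vertex, and after enlarging each radius by the $O(r/C)$ term above the enlarged disks cover all of $\SR$. Comparing areas, $\sum_{j=1}^{k}\pi\big((k-j)r+O(r/C)\big)^2\ge 1$, i.e.\ $\pi r^2\sum_{i=0}^{k-1}\big(i+O(1)\big)^2\ge 1$; the left side is $O(r^2 k^3)$, so $k=\Omega(r^{-2/3})$.

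For the matching upper bound I would exhibit an explicit burning sequence. Set $R:=r^{1/3}$ and partition $\SR$ into $m=O(R^{-2})=O(r^{-2/3})$ axis-parallel squares of side at most $R/\sqrt2$, so that the disk of radius $R$ about the centre $c_\ell$ of the $\ell$-th square covers that square; taking the squares large compared with $r_c$ we obtain, a.a.s., a vertex $w_\ell$ in each, with the $w_\ell$ pairwise distinct. By the upper bound on $d_G$, every vertex of the disk of radius $R$ about $c_\ell$ lies within graph distance $\rho_0:=\lceil\alpha(R+O(r/C))/r+\beta\rceil=O(R/r)=O(r^{-2/3})$ of $w_\ell$, i.e.\ inside $B_G(w_\ell,\rho_0)$. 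Now put $k:=m+\rho_0=O(r^{-2/3})$ and let the burning sequence start with $w_1,\dots,w_m$, completed by arbitrary further distinct vertices (which exist since $k=o(n)$). For every $\ell\le m$ we have $k-\ell\ge k-m=\rho_0$, hence $B_G(w_\ell,k-\ell)\supseteq B_G(w_\ell,\rho_0)$ contains all vertices of the $\ell$-th disk; since the disks cover $\SR\supseteq V$, this gives $\bigcup_{j=1}^{k}B_G(v_j,k-j)=V$ and therefore $b(G)\le k=O(r^{-2/3})$. Combining the two bounds yields $b(G)=\Theta(r^{-2/3})$ a.a.s.
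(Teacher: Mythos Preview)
Your proof is correct and follows essentially the same route as the paper: the lower bound is a Euclidean area argument (the paper phrases it as counting $\Theta(r_c)$-cells rather than invoking the density statement, but this is the same idea), and the upper bound tessellates $\SR$ into $\Theta(r^{-2/3})$ cells of side $\Theta(r^{1/3})$, burns one vertex per cell, and then uses the comparison $d_G\le O(d_E/r)$ to finish in $O(r^{-2/3})$ further steps. The only difference worth noting is that the paper imports the metric comparison $d_G(u,v)\le C'\,d_E(u,v)/r$ as a black box from~\cite{Diaz} (which is indeed where the assumption $r\ge Cr_c$ with $C$ large is needed), whereas you sketch the standard cell-chain proof yourself.
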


\bigskip

\subsection{Grids} We also deal with the Cartesian product of two paths, which is the only deterministic result shown in this paper. In~\cite{BJR}, the burning number of the Cartesian product of two paths was studied, including non-symmetric cases, that is, the product of two paths of different lengths. However, only the order of this graph parameter was found. Here, we investigate the asymptotic behaviour of the burning number for grids.
\begin{theorem}\label{thm:mainGrid}
$$b(P_m \square P_n) =
\begin{cases}
 (1+o(1)) (3/2)^{1/3} (mn)^{1/3}, & \text{ if } \sqrt{n} \ll m \le n, \\
 \Theta(\sqrt{n}), & \text{ if } m = O(\sqrt{n}).
\end{cases}
$$
\end{theorem}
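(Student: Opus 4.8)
The plan rests on the standard reformulation of the burning number as a covering problem: for any graph $G$, one has $b(G)\le k$ if and only if there exist vertices $v_1,\dots,v_k$ with $V(G)=\bigcup_{i=1}^{k}B(v_i,k-i)$, where $B(v,r)=\{u:d_G(u,v)\le r\}$ (see~\cite{BJR}). In $P_m\square P_n$ the ball $B(v,r)$ has at most $2r^2+2r+1$ vertices (the size of an $\ell_1$-ball of radius $r$ in $\mathbb{Z}^2$), and
$$
\sum_{j=0}^{k-1}(2j^2+2j+1)=\frac{2k^3+k}{3}.
$$
Hence $b(P_m\square P_n)=k$ forces $mn\le(2k^3+k)/3$, which gives $b(P_m\square P_n)\ge(1-o(1))(3/2)^{1/3}(mn)^{1/3}$; in the regime $\sqrt n\ll m\le n$ this is already the claimed lower bound, since there $(mn)^{1/3}\gg\sqrt n$. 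For the second regime I would add the observation that projecting onto the second coordinate is a surjective $1$-Lipschitz map $P_m\square P_n\to P_n$, so applying it to the centres of an optimal ball-cover of $P_m\square P_n$ yields a ball-cover of $P_n$ with the same radii; thus $b(P_m\square P_n)\ge b(P_n)=\lceil\sqrt n\,\rceil$, which is $\Omega(\sqrt n)$.

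For the upper bound when $m=O(\sqrt n)$, say $m\le c_0\sqrt n$, I would set $k=(1+c_0)\sqrt n$ (rounded up) and build a ball-cover with all centres at mid-height: place the centres of the balls of radius $r$, for $r$ ranging over the values at least about $m/2$, consecutively along the long axis, so that the radius-$r$ ball fully covers a slab of all $m$ rows of width roughly $2r-m$, with consecutive full-coverage slabs abutting. The total width covered is $(1+o(1))(k-m/2)^2\ge n$, and the remaining (small-radius) centres are placed arbitrarily inside the covered region; by the reformulation $b(P_m\square P_n)\le k=O(\sqrt n)$, so $b(P_m\square P_n)=\Theta(\sqrt n)$.

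The substantial part is the matching upper bound $b(P_m\square P_n)\le(1+o(1))(3/2)^{1/3}(mn)^{1/3}$ when $\sqrt n\ll m\le n$, where one must cover the $m\times n$ rectangle almost perfectly. Fix $\delta>0$, let $k=(1+\delta)(3/2)^{1/3}(mn)^{1/3}$, and note that $k=o(\min(m,n))$ in this regime, so every relevant ball is a genuine (undistorted) $\ell_1$-diamond. The cover is assembled from thin horizontal slabs: the slab at height $y\in[0,m]$ is filled by a standard periodic tiling of $\mathbb{Z}^2$ by congruent $\ell_1$-balls of radius $\rho\approx(3ny/2)^{1/3}$. This profile is forced by balancing supply against demand — we possess one diamond per integer radius, while a slab of width $n$ and height $\mathrm dy$ needs about $n\,\mathrm dy/(2\rho^2)$ diamonds of radius $\rho$ — and integrating $2\rho^2\,\mathrm d\rho=n\,\mathrm dy$ from $\rho=0$ up to $\rho=k$ returns exactly $\tfrac23k^3=(1+o(1))mn$, i.e.\ the right constant. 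Concretely I would split the range of radii into blocks on which $\rho$ varies by a factor $1+o(1)$ while still comprising $\Omega(n/k)$ consecutive integers, tile the corresponding slab with the tiling pattern for the smallest radius in the block (filling tile positions with the actual, slightly larger balls of the block, which only adds overlap), use only radii in $[\delta'k,k)$ for a small $\delta'$, and exploit the slack $\delta$ so that the resulting stack of slabs is taller than $m$ and can be positioned to cover the whole rectangle; the unused small radii are placed arbitrarily.

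I expect the main obstacle to be the error accounting in this last construction: one must check that every form of waste is $o(mn)$ — the left and right boundaries of each width-$n$ slab (a strip of width $O(\rho)$, hence $O(km)$ in total), the interfaces between consecutive slabs, the gap between a block's diamonds being treated as equal-sized and their true radii, and the difference between the Riemann sum $\sum 2\rho_t^2(\rho_t-\rho_{t-1})$ and $\int_0^k 2\rho^2\,\mathrm d\rho$. All of these turn out to be $o(mn)$ precisely because $m^2\gg n$ here (so $k=o(m)$ and $n/k^2=o(1)$), but arranging the partition to be simultaneously fine enough to make the Riemann-sum error negligible and coarse enough that each slab has height $\Omega(\rho)$ and is therefore tileable is the delicate balancing act, and it is exactly what pins down the leading constant $(3/2)^{1/3}$.
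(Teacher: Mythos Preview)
Your lower bounds and the $m=O(\sqrt n)$ upper bound essentially match the paper's (the paper restricts to a boundary copy of $P_n$ rather than projecting, but the effect is the same).

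For the main upper bound when $\sqrt n\ll m\le n$, your construction is genuinely different from the paper's. The paper covers the grid with \emph{diagonal} strips: each strip is a chain of $\ell_1$-balls whose radii increase by exactly one as you move down the diagonal, so that consecutive diamonds abut perfectly along a $45^\circ$ edge; successive strips are then placed side by side across the top of the grid. Because the radii within a strip are already consecutive integers, no blocking or Riemann-sum argument is needed, and the waste analysis reduces to counting what overlaps between adjacent strips and what spills over the grid boundary (three balls per strip). Your approach instead stacks \emph{horizontal} slabs, each tiled by Lee spheres of nearly constant radius, with the radius profile $\rho(y)\approx(3ny/2)^{1/3}$ dictated by the continuous optimisation. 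This makes the origin of the constant $(3/2)^{1/3}$ completely transparent, at the price of the discretisation bookkeeping you anticipate. One small correction: for the inter-slab boundary waste to be $o(mn)$ you need each slab to have height $\omega(\rho)$, not merely $\Omega(\rho)$, which forces block length $\omega(n/k)$ rather than $\Omega(n/k)$; since $n/k^2\to 0$ in this regime there is room to do this (e.g.\ block length $\sqrt{n}$), but with blocks of length only $\Theta(n/k)$ the boundary loss would be a fixed constant fraction and you could not drive $\delta\to 0$. Both routes work; the paper's diagonal strips are slicker to execute, while yours is the more conceptual ``solve the continuous problem, then discretise'' argument.
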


\subsection{Cost of drunkenness}
Finally, we also investigate the following variant of the problem, inspired by a similar variant of the game of cops and robbers~\cite{drunk2, drunk1}. For a given graph $G=(V,E)$, instead of selecting the sequence of vertices $(x_1, x_2, \ldots, x_k)$ so that the number of burning vertices at the end of the process is maximized, the sequence is selected randomly as it was generated by a drunk person. There are at least three natural notions of randomness (levels of ``drunkenness'') one can consider. 
\begin{itemize}
\item [(i)] At time $i$ of the process, $x_i$ is selected uniformly at random from $V$; that is, for each $v \in V$, $\Prob(x_i = v) = 1/n$. (In particular, it might happen that $v$ is already burning or maybe even was selected earlier in the process.)
\item [(ii)] At time $i$ of the process, $x_i$ is selected uniformly at random from those vertices that were not selected before; that is, for each $v \in V$ that was not selected earlier in the process, $\Prob(x_i = v) = 1/(n-i+1)$. (However, it still might happen that $v$ is already burning.)
\item [(iii)] At time $i$ of the process, $x_i$ is selected uniformly at random from those vertices that are not burning at time $i$.
\end{itemize}
Let $b_1(G)$, $b_2(G)$, and $b_3(G)$ be the random variables associated with the first time all vertices of $G$ are burning, for the three variants of selecting vertices mentioned above. Clearly, each of these random variables are at least $b(G)$, and the variables can be easily coupled to see that
\begin{equation}\label{eq:3b}
b_1(G) \ge b_2(G) \ge b_3(G) \ge b(G).
\end{equation}
For $j \in [3]$, we define $c_j(G) = b_j(G) / b(G) \ge 1$ to be the \emph{cost of drunkenness} of $G$ for the three processes we focus on. 

\medskip

We illustrate the cost of drunkenness on the path $P_n$ on $n$ vertices; suppose that $V(P_n) =\{v_1, v_2, \ldots, v_n\}$, with $v_i$ being adjacent to $v_{i-1}$ for $2\leq i\leq n$. It is easy to see that $b(P_n) = \lceil \sqrt{n} \rceil$ (see~\cite{BJR} for this and more results). The following result shows that the first two costs of drunkenness ($c_1(P_n)$ and $c_2(P_n)$) are asymptotically equal to each other, and that both $b_1(P_n)$ and $b_2(P_n)$ are much bigger than $b(P_n)$. On the other hand, the third cost of drunkenness is much smaller, and we have $b_3(P_n)=\Theta(b(P_n))$. More precisely, we have the following result:

\begin{theorem}\label{thm:Path1}
A.a.s.\ the following holds:
\begin{itemize}
\item [(i)] $b_1(P_n) = (1+o(1)) b_2(P_n) = (1+o(1)) \sqrt{n \log n / 2},$ and thus \\
$c_1(P_n) = (1+o(1)) c_2(P_n) = (1+o(1)) \sqrt{\log n / 2}.$
\item [(ii)]
$
b_3(P_n) = \Theta( \sqrt{n}),
$
and thus
$
c_3(P_n) = \Theta(1).
$
\end{itemize}
\end{theorem}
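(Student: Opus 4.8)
The plan is to reformulate $b_j(P_n)\le t$ as a covering statement. Write $V(P_n)=\{v_1,\dots,v_n\}$ and let $x_1,x_2,\dots$ be the random sequence of chosen vertices; by the end of round $t$ the fire started at $x_i$ in round $i$ has burned exactly the ball $B(x_i,t-i)=\{v_j:|j-x_i|\le t-i\}$, so $b_j(P_n)\le t$ if and only if $\bigcup_{i=1}^{t}B(x_i,t-i)=V(P_n)$, i.e.\ $t$ intervals of radii $t-1,\dots,1,0$ with random centres $x_1,\dots,x_t$ cover the path. In variant (i) the centres are i.i.d.\ uniform on $[n]$, in variant (ii) they are uniform on $[n]$ sampled without replacement, and in variant (iii) $x_i$ is uniform over the vertices not yet burned.

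For part (i) I would run a first-moment/second-moment argument on $X=X(t):=\#\{y:v_y\text{ is unburned after round }t\}$. In variant (i) the $x_i$ are independent, so $\Prob(v_y\text{ unburned})=\prod_{s=0}^{t-1}\bigl(1-|B(v_y,s)|/n\bigr)$, and since at the relevant scale $t=o(n^{2/3})$ the logarithm of this product equals $-\tfrac1n\sum_{s=0}^{t-1}|B(v_y,s)|+o(1)$. The ``catchment sum'' $\sum_{s=0}^{t-1}|B(v_y,s)|$ equals $t^{2}$ for a central vertex and decreases (to about $t^{2}/2$) as $v_y$ approaches an endpoint, so $\E{X}$ is a sum of a bulk contribution $\sim n\,e^{-t^{2}/n}$ and two endpoint contributions $\sim (n/t)\,e^{-t^{2}/(2n)}$; a short computation identifies the value of $t$ at which $\E{X}$ passes from $\infty$ to $0$ with the one asserted in the theorem. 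For $t=(1+\eps)$ times that value, $\E{X}\to 0$, so Markov's inequality gives the claimed upper bound a.a.s.; for $t=(1-\eps)$ times it, $\E{X}\to\infty$, and Chebyshev's inequality gives the lower bound once one checks $\V(X)=o(\E{X}^{2})$, the correlation between ``$v_y$ unburned'' and ``$v_{y'}$ unburned'' being governed by $\sum_{s=0}^{t-1}(|B(v_y,s)|/n)^{2}=O(t^{3}/n^{2})=o(1)$. The same two estimates go through for variant (ii): since $t=o(n)$, sampling with and without replacement is asymptotically indistinguishable for the events involved, which yields $b_1(P_n)=(1+o(1))b_2(P_n)$.

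For part (ii) the lower bound $b_3(P_n)\ge b(P_n)=\lceil\sqrt n\rceil$ is immediate from \eqref{eq:3b}. For the matching upper bound $b_3(P_n)=O(\sqrt n)$ the point is that in variant (iii) a fire is never wasted on an already-burned vertex. Fix a large constant $K$, run $t=K\sqrt n$ rounds, and track $u_k$, the number of unburned vertices after round $k$, together with $g_k$, the number of maximal unburned ``gaps''. Each round spreading shrinks every gap by one vertex at each end, so $u_k$ drops by roughly $2g_{k-1}$, while the new fire $x_k$ — uniform among the $u_{k-1}$ unburned vertices, hence landing in a gap with probability proportional to its length — typically falls deep inside a long gap and splits it, so $g_k$ grows essentially linearly while $u_k\ge\eps n$. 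Therefore $u_k=n-\Omega(k^{2})$ and $u_k<\eps n$ within $O(\sqrt n)$ rounds; from then on any gap longer than the current fire radius $t-k\ge(K/2)\sqrt n$ is hit and swallowed within $O(\sqrt n)$ rounds, and every shorter gap is eaten by spreading alone within $O(\sqrt n)$ rounds, so the process terminates. Hence $b_3(P_n)=\Theta(\sqrt n)$ and $c_3(P_n)=\Theta(1)$.

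The step I expect to be the main obstacle is twofold. For part (i) it is pinning down the leading constant: one must decide which family of vertices — the near-endpoint ones, with their smaller catchment sum, or the bulk — controls the transition of $\E{X}$, and then push the second-moment estimate uniformly right up to the threshold. For part (ii) it is controlling $g_k$ (equivalently, that $u_k$ really does decay quadratically) against the dependence of the placements on the whole history; the danger is that a few very long gaps monopolise the placement probability so that the burning fails to be ``parallel'' enough, and this is handled by the two facts that placement probabilities are proportional to gap lengths (keeping the gaps balanced) and that a gap no longer than the current fire radius is consumed completely the next time it is hit, so no long gap can persist.
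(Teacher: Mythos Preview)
There is a genuine gap in your plan for the upper bound in part~(i). Your own formula shows it: the bulk term $n\,e^{-t^{2}/n}$ and the endpoint term $(n/t)\,e^{-t^{2}/(2n)}$ both die only at $t\sim\sqrt{n\log n}$, not at $\sqrt{n\log n/2}$ (plug in $t=\sqrt{c\,n\log n}$: they become $n^{1-c}$ and $\sim n^{(1-c)/2}/\sqrt{\log n}$ respectively, so both require $c\ge 1$). Hence Markov's inequality on $X=\#\{\text{unburned vertices}\}$ yields only $b_1(P_n)\le(1+o(1))\sqrt{n\log n}$, a factor $\sqrt2$ too large. The underlying reason is that unburned vertices cluster: if $v_y$ is unburned then typically an entire interval of length $\Theta(n/t)$ around it is unburned, so $\E{X}$ overcounts the number of uncovered ``gaps'' by that factor, and first moment on $X$ cannot detect the true threshold. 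The paper repairs this by partitioning $P_n$ into $\sqrt n$ subpaths of length $\sqrt n$ and applying the first moment to the number of subpaths not wholly contained in any single ball; there are now only $\sqrt n$ objects, each surviving with probability $\sim e^{-k^{2}/n}$, and $\sqrt n\,e^{-k^{2}/n}=o(1)$ precisely when $k\ge(1+o(1))\sqrt{n\log n/2}$. Your second-moment lower bound for $b_2$ is fine and essentially matches the paper's, which restricts attention to vertices spaced $2k$ apart to kill the correlations.

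For part~(ii) your gap-tracking heuristic is a different route from the paper's, and the obstacle you flag---making $g_k=\Omega(k)$ rigorous---is real and not handled by your sketch: short gaps are eaten by spreading so $g_k$ is not monotone, the placement law depends on the entire gap profile, and ``typically falls deep inside a long gap'' is exactly the statement that needs proof. The paper sidesteps this dependence by a coupling argument: it partitions $P_n$ into subpaths, couples the process with an ``independent'' one in which during a first sub-phase fires land uniformly among the surviving subpaths (ignoring whether the vertex is burnt) and spreading is postponed to a second sub-phase, and then shows via Chernoff that the number of not-yet-hit subpaths drops like an iterated exponential from phase to phase. Equal-length phases of $3\sqrt n$ steps give $b_3=O(\sqrt n\,\log^*n)$; the final trick is to make the phase lengths shrink geometrically (and halve each surviving subpath after every phase), which keeps the total number of steps $O(\sqrt n)$ while still driving the number of unburnt vertices down to $O(n/\log n)$, after which a single extra $O(\sqrt n)$ phase finishes.
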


\subsection{Organization of the paper}

All our main results, Theorem~\ref{thm:mainRandom}, Theorem~\ref{thm:mainRGG}, Theorem~\ref{thm:mainGrid}, and Theorem~\ref{thm:Path1}, are proved independently in Section~\ref{sec:Gnp}, Section~\ref{sec:geometric}, Section~\ref{sec:grid}, and Section~\ref{sec:drunkenness}. 

\section{Proof of Theorem~\ref{thm:mainRandom}}\label{sec:Gnp}

In this section we consider $G \in \mathcal{G}(n,p)$. In order to bound the burning number of $G$ from above, we will make use of the following result for random graphs, see~\cite[Corollary~10.12]{bol}.

\begin{lemma}[\cite{bol}, Corollary 10.12]\label{lem:diameter}
Suppose that $d = p(n-1) \gg \log n$, $1-p \gg n^{-2}$, and 
\[
d^i/n - 2 \log n \to \infty \text{ \ \ \ \ and \ \ \ \ } d^{i-1}/n - 2 \log n \to -\infty.
\]
Then the diameter of  $G \in \mathcal{G}(n,p)$ is equal to $i$ a.a.s.
\end{lemma}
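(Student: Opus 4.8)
The plan is to sandwich the diameter, proving $D(G) \ge i$ and $D(G) \le i$ a.a.s.\ separately; both bounds rest on estimating the sizes of breadth-first-search balls $B(v,k) = \{w : d_G(v,w) \le k\}$ and spheres $S(v,k) = B(v,k) \setminus B(v,k-1)$ around a fixed vertex $v$. Exposing the edges of $G$ one BFS level at a time — at step $k$ revealing only the as-yet-unexposed potential edges between $S(v,k-1)$ and $V \setminus B(v,k-1)$ — one sees that, conditionally on $B(v,k-1)$, the size $|S(v,k)|$ is binomial with $n - |B(v,k-1)|$ trials and success probability $1 - (1-p)^{|S(v,k-1)|}$; while $|B(v,k-1)| = o(n)$ this mean is $(1+o(1))\,d\,|S(v,k-1)|$, so Chernoff bounds and induction give $|B(v,k)| = (1+o(1))d^k$ with high probability as long as $d^k = o(n)$, and once $|B(v,k-1)| = o(n)$ but $d^k/n \to \infty$ a single further step leaves only $(1+o(1))\,n\,e^{-d^k/n}$ vertices outside $B(v,k)$. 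Note that $d^{i-1}/n - 2\log n \to -\infty$ forces $d^{i-1} = O(n\log n)$ and hence (using $d \gg \log n$) $d^{i-2} = o(n)$, so these estimates apply at the levels we need; the extra hypothesis $1-p \gg n^{-2}$ guarantees that a.a.s.\ $G$ has a non-edge, so $D(G) \ge 2$.

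For $D(G) \ge i$ I would exhibit, a.a.s., two vertices at distance at least $i$. If $d^{i-1} = o(n)$ this is immediate, since a.a.s.\ $\Delta(G) = (1+o(1))d$ and hence $|B(v,i-1)| \le (1+o(1))d^{i-1} = o(n) < n$ for every $v$. If instead $d^{i-1} = \Omega(n)$, I would apply the first/second moment method to the number $X$ of ordered pairs $(u,w)$ with $d_G(u,w) \ge i$: by the ball estimates $\mathbb{E}X = (1+o(1))\,n^2 e^{-d^{i-1}/n}$, which $\to \infty$ because $d^{i-1}/n - 2\log n \to -\infty$ (in the borderline case $i = 2$ this reads $n^2(1-p) \to \infty$, which is exactly $1-p \gg n^{-2}$); a second-moment computation — two such events on disjoint quadruples of vertices are asymptotically independent, the BFS explorations overlapping in only $o(d^{i-2})$ potential edges while $B(u,i-2), B(u',i-2)$ attain their typical sizes — gives $\mathrm{Var}\,X = o((\mathbb{E}X)^2)$, and Chebyshev's inequality finishes.

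For $D(G) \le i$ I would bound $\Pr(d_G(u,w) > i)$ for each ordered pair and take a union bound over the fewer than $n^2$ pairs, using that $d_G(u,w) \le i$ if and only if $B(u,\lfloor i/2\rfloor) \cap B(w,\lceil i/2\rceil) \ne \emptyset$. Grow $A := B(u,\lfloor i/2\rfloor)$; then, keeping the potential edges out of $A$ unexposed, grow the BFS from $w$ out to radius $\lceil i/2\rceil - 1$ while avoiding $A$. If the two balls meet, $d_G(u,w) < i$; otherwise the chance that none of the fresh edges between $A$ and the resulting $w$-ball is present is at most $(1-p)^{|A|\cdot|B(w,\lceil i/2\rceil-1)|}$, which, when both sizes are close to their means $\sim d^{\lfloor i/2\rfloor}$ and $\sim d^{\lceil i/2\rceil-1}$, equals $\exp(-(1+o(1))\,p\,d^{i-1}) = \exp(-(1+o(1))\,d^i/n)$; since $d^i/n - 2\log n \to \infty$ this is $o(n^{-2})$, so the union bound closes. (The cases where one of the balls is a constant fraction of $V$, which can occur when $d$ is very large and $i$ small, are even easier, via a pigeonhole argument.)

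The technical heart — and the reason the clean hypothesis $d^i/n - 2\log n \to \infty$ is the exactly right one — is the precision demanded of these ball-size estimates. The union bound over $\Theta(n^2)$ pairs leaves almost no room, so one genuinely needs the estimate $\Pr(d_G(u,w) > i) = e^{-(1+o(1))d^i/n}$ with an $o(1)$ error small enough to be absorbed; moreover $i$ can be as large as $\Theta(\log n/\log\log n)$ when $d$ is only slightly above $\log n$, so the level-by-level Chernoff bounds must be propagated through $\Theta(i)$ levels without accumulating more than $o(1)$ relative error (they are summable because they decay geometrically in $k$), and the single transition level at which the ball first becomes comparable to $n$ must be treated separately by a one-step argument. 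Carrying out these concentration estimates — which, together with the second-moment estimate above, is precisely what is packaged in the classical computation of~\cite{bol} — is the part of the proof I would cite rather than reprove.
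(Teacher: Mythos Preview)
The paper does not prove this lemma at all: it is stated as a citation of Corollary~10.12 in Bollob\'as's book~\cite{bol}, and the paper immediately moves on to record a corollary extracted from that same reference. So there is no ``paper's own proof'' to compare your proposal against.

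That said, your sketch is a faithful outline of the standard argument that appears in~\cite{bol}: level-by-level BFS exploration with Chernoff bounds to control ball sizes while they are $o(n)$, a separate treatment of the transition step, a second-moment argument for the lower bound $D(G)\ge i$, and a two-sided growth plus union bound over $O(n^2)$ pairs for $D(G)\le i$. The only remark is that, for the purposes of this paper, you would not be expected to reproduce any of this; the correct move is simply to cite the result, as the authors do.
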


From the proof of this result, we have the following corollary.

\begin{corollary}\label{cor:diameter}
Suppose that $d = p(n-1) \gg \log n$ and that
 $$d^i/n - 2 \log n \to \infty.$$ Then the diameter of $G \in \mathcal{G}(n,p)$ is at most $i$ a.a.s. 
\end{corollary}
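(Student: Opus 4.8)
\medskip

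The plan is to extract the upper-bound half of the proof of Lemma~\ref{lem:diameter} given in~\cite{bol}. That proof establishes two facts: the lower bound $D(G) \ge i$, which is where the hypotheses $d^{i-1}/n - 2\log n \to -\infty$ and $1-p \gg n^{-2}$ are used, and the upper bound $D(G) \le i$, which uses only $d \gg \log n$ and $d^i/n - 2\log n \to \infty$. Since the corollary asserts only the latter, it suffices to reproduce the upper-bound argument and verify that it never appeals to the two discarded hypotheses. Note that $d \gg \log n$ is well above the connectivity threshold $d \sim \log n$, so $G$ is a.a.s.\ connected and $D(G)$ is finite; and the minimality of $i$ plays no role here --- only the single relation $d^i/n - 2\log n \to \infty$ does.

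In outline, the upper-bound argument fixes an ordered pair of distinct vertices $u,v$ and shows $\Pr\big(d_G(u,v) > i\big) = o(n^{-2})$, after which a union bound over the fewer than $n^2$ pairs finishes the proof. For $i = 2$ this is immediate and already sharp: the events $\{uv \notin E\}$ and ``$u$ and $v$ have no common neighbour'' depend on disjoint sets of potential edges, so
$$
\Pr\big(d_G(u,v) > 2\big) = (1-p)(1-p^2)^{n-2} \le \exp\big(-p^2(n-2)\big) \le \exp\big(-d^2/n + 1/n\big) = o(n^{-2}),
$$
using $p = d/(n-1)$ and $d^2/n - 2\log n \to \infty$; note this estimate needs no lower bound on $1-p$. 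For $i \ge 3$ one writes $i-1 = a+b$ with $a = \lceil (i-1)/2 \rceil$ and $b = \lfloor (i-1)/2 \rfloor$, grows breadth-first neighbourhoods of $u$ to depth $a$ and of $v$ to depth $b$, and argues that with probability $1 - o(n^{-2})$ these two balls either already intersect (so $d_G(u,v) \le a+b < i$) or are disjoint with outer boundaries whose sizes have product essentially $d^{a+b} = d^{i-1}$; conditioning on the latter, the chance that no edge joins the two boundaries is at most $(1-p)^{(1-o(1))d^{i-1}} \le \exp\!\big(-(1-o(1)) d^i/(n-1)\big)$, which is $o(n^{-2})$ exactly because $d^i/n - 2\log n \to \infty$, and if such an edge is present then $d_G(u,v) \le a+b+1 = i$. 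The hypothesis $1-p \gg n^{-2}$ is unnecessary: either $d^2/n - 2\log n \to \infty$, in which case the displayed estimate already gives $D(G) \le 2 \le i$, or else (along the relevant subsequence) $d^2/n = O(\log n)$, forcing $p = O(\sqrt{\log n/n}) = o(1)$ and hence $1-p \gg n^{-2}$ trivially.

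The step requiring the most care --- and the one I would lean most heavily on~\cite{bol} for --- is justifying the exponent in $(1-p)^{(1-o(1))d^{i-1}}$, i.e.\ showing that the relevant product of boundary sizes is $(1-o(1))d^{i-1}$ and not merely $\Theta(d^{i-1})$ with a constant that could drop below $1$. This sharpness is precisely what pins down the constant $2$ in the threshold $2\log n$; in the delicate range where $d$ is only slightly larger than $\log n$ it cannot be obtained from crude Chernoff bounds and instead rests on the careful first- and second-moment analysis of neighbourhood sizes carried out in~\cite{bol}. Everything else is routine bookkeeping: controlling a possible overlap of the two balls, and checking that for $a,b \le (i-1)/2$ the balls remain far from exhausting $V$ until the last level, so that the neighbourhood-growth estimates genuinely apply.
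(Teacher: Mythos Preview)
Your proposal is correct and is exactly what the paper intends: the paper gives no argument beyond the sentence ``From the proof of this result, we have the following corollary,'' and you have simply spelled out how the upper-bound half of the proof in~\cite{bol} is extracted, together with the observation that the hypotheses $1-p \gg n^{-2}$ and $d^{i-1}/n - 2\log n \to -\infty$ are used only for the matching lower bound. Your dichotomy at the end (either the $i=2$ case already applies, or $p = o(1)$) is a clean way to dispense with the condition on $1-p$.
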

In order to obtain lower bounds and the upper bound in the first case of (\ref{eq:i}), we will need the following expansion lemma investigating the shape of typical neighbourhoods of vertices. Before we state the lemma we need a few definitions. For any $j \geq 0$, let us denote by $N(v,j)$ the set of vertices at distance at most $j$ from $v$, and by $S(v,j)$ the set of vertices at distance exactly $j$ from $v$ (note that $S(v,0)=\{v\})$.

\begin{lemma}\label{lem:expansion}
Let $G = (V,E) \in \mathcal{G}(n,p)$ and let $\eps > 0$. Suppose that $d=p(n-1)$ is such that  $\log n \ll d = o(n),$ and let $i \ge 1$ be the largest integer such that $d^{i} = o(n).$ Then, a.a.s.\ the following properties hold:
\begin{itemize}
\item[(i)] for all $j=1,2,\ldots,i$ and all $v \in V$,
$$
|N(v,j)| = |S(v,j)| (1+O(1/d)) = d^j(1+o(1)),
$$ 
\item[(ii)] there exists $v \in V$ such that $N(v,i+1) = V$, provided that $d^{i+1}/n \ge (1+\eps) \log n$,
\item[(iii)] for all $v \in V$, 
$$|V \setminus N(v,i+1)| = e^{-c (1+o(1))} n,$$
provided that $c = c(n) = d^{i+1}/n \le (1-\eps) \log n$.
\end{itemize}
\end{lemma}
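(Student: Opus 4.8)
The plan is to prove all three parts via first- and second-moment estimates on the sizes of the spheres $S(v,j)$, exploiting the independence of edges in $\mathcal{G}(n,p)$ and then taking a union bound over the (at most $n$) vertices $v$, or over all $n$ vertices and all $i = O(\log n / \log d)$ radii where needed.

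For part (i), I would proceed by induction on $j$, exposing the graph in breadth-first layers from a fixed vertex $v$. Suppose we have revealed $S(v,0), \ldots, S(v,j)$ and that $|N(v,j)| = (1+o(1)) d^j$ with $d^j = o(n)$. Each of the remaining $n - |N(v,j)| = n(1-o(1))$ unexposed vertices lands in $S(v,j+1)$ independently with probability $q_j = 1 - (1-p)^{|S(v,j)|}$; since $|S(v,j)| \cdot p = (1+o(1)) d^{j+1}/n$ may be small, I would write $q_j = |S(v,j)| p (1 + O(|S(v,j)| p))$, so that $\E{|S(v,j+1)| \,\big|\, \mathcal{F}_j} = (1+o(1)) |S(v,j)| d$. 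Because the expected layer size is $\Theta(d^{j+1}) \gg \log n$ (as $d \gg \log n$ and $j+1 \le i$), a Chernoff bound gives concentration up to a $(1+o(1))$ factor with failure probability $o(n^{-1})$, and a union bound over all $v$ and all $j \le i$ closes the induction. The relation $|N(v,j)| = |S(v,j)|(1 + O(1/d))$ then follows because the layers grow geometrically with ratio $(1+o(1))d$, so $|N(v,j)| = \sum_{\ell \le j} |S(v,\ell)| = |S(v,j)|(1 + O(1/d))$.

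For parts (ii) and (iii) I would reveal one more layer, $S(v,i+1)$, from the sphere $S(v,i)$ of size $(1+o(1)) d^i$. Conditioned on $N(v,i)$, each of the $n - |N(v,i)| = n(1-o(1))$ vertices outside $N(v,i)$ fails to join $N(v,i+1)$ independently with probability $(1-p)^{|S(v,i)|}$. Now $|S(v,i)| \cdot p = (1+o(1)) d^{i+1}/n = (1+o(1))c$, so $(1-p)^{|S(v,i)|} = e^{-(1+o(1))c}$. Hence $\E{|V \setminus N(v,i+1)|} = (1+o(1)) e^{-(1+o(1))c} n = e^{-(1+o(1))c} n$. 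When $c \le (1-\eps)\log n$ this expectation is $\ge n^{\eps + o(1)} \to \infty$, so a Chernoff (or Azuma, treating each exterior vertex independently) bound gives concentration around the mean with failure probability $o(n^{-1})$, and a union bound over $v$ yields (iii). When $c \ge (1+\eps)\log n$, the expected number of vertices missing from $N(v,i+1)$ is $e^{-(1+o(1))c} n \le n^{-\eps + o(1)} \to 0$ for at least one suitably chosen $v$ (in fact one may take the $v$ maximizing $|S(v,i)|$, or simply observe the bound holds with room to spare for a typical $v$); a first-moment bound then shows $N(v,i+1) = V$ with probability $1 - o(1)$, giving (ii).

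The main delicate point is maintaining enough independence while revealing successive BFS layers so that the Chernoff bounds genuinely apply: after conditioning on $\mathcal{F}_j = \sigma(S(v,0),\ldots,S(v,j))$, the edges from $S(v,j)$ to the as-yet-unexposed vertices are still independent and unexposed, which is what makes the one-step layer growth a sum of independent indicators; one must be careful that the ``error'' sets (vertices already in $N(v,j)$) are negligible, i.e. $d^i = o(n)$, which is exactly the hypothesis. A secondary technical nuisance is that the per-vertex success probability $q_j$ is not exactly $|S(v,j)| p$ but only asymptotic to it, and the accumulated multiplicative errors over $i = O(\log n/\log d)$ steps must be controlled; since each step contributes a factor $1 + O(d^{j}/n) = 1 + o(1)$ and these are summable (geometric in $j$ up to the last term, which dominates and is still $o(1)$ since $d^i = o(n)$), the total error remains $1+o(1)$, so this is routine once set up correctly.
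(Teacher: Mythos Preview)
Your proposal is correct and follows essentially the same approach as the paper: expose the graph in BFS layers from a fixed vertex, use Chernoff's bound at each step to control the layer sizes with failure probability $o(n^{-1})$, take a union bound over all vertices (and radii) for part~(i), and for parts~(ii) and~(iii) compute the expected number of vertices outside $N(v,i)$ with no neighbour in $S(v,i)$ and apply Markov/Chernoff respectively. The paper is a bit more explicit about the cumulative error budget, tracking that up to three ``bad'' steps contribute a $(1+o(1))$ factor and the remaining $i=O(\log n/\log\log n)$ steps contribute $(1+O(\log^{-1}n))^i = 1+o(1)$, whereas you fold this into the remark that the step errors are geometrically summable; both arguments arrive at the same conclusion.
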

\begin{proof}
Let $v \in V$ and consider the random variable $X = X(v) = |S(v,1)|$. It is clear that $X \in \textrm{Bin}(n-1,p)$ so we get that $\E{X} = d.$ A consequence of Chernoff's bound (see e.g.~\cite[Corollary~2.3]{JLR}) is that 
\begin{equation}\label{eq:chern}
\Prob \Big( |X-\E X| \ge \eps \E X \Big) \le 2\exp \left( - \frac {\eps^2 \E X}{3} \right)  
\end{equation}
for  $0 < \eps < 3/2$. Hence, after taking $\eps = 2 \sqrt{\log n / d}$, we get that with probability $1-o(n^{-1})$ we have 
$$
X = \E{X} (1+O(\eps)) = d (1+o(1)).
$$
 
We will continue expanding neighbourhoods of $v$ using the BFS (breath-first search) procedure. Suppose that $S=N(v,j)$ for some $j \ge 1$, $s=|S|$, and our goal is to estimate the size of $N(v,j+1)$. Consider the random variable $X = X(S)$ counting the number of vertices outside of $S$ with at least one neighbour in $S$.  (Note that $X$ is only a lower bound for $|N(v,j+1)|$, since, in fact, $X=|N(v,j+1) \setminus N(v,j)|$. We will show below that $|N(v,j+1)| \leq (1+O(1/d))X$.) We will bound $X$ in a stochastic sense. There are two things that need to be estimated: the expected value of $X$, and the concentration of $X$ around its expectation. 

It is clear that  
\begin{eqnarray*}
\E X &=& \left( 1 - \left(1- \frac {d}{n-1} \right)^s \right) (n-s) \\
&=& \left( 1 - \exp \left( - \frac {ds}{n} (1+O(d/n)) \right) \right) (n-s) \\
&=& \frac {ds}{n} (1+O(ds/n)) (n-s) 
= ds (1+O(ds/n)). 
\end{eqnarray*}
It follows that $\E X = ds (1+O(\log^{-1} n))$ provided $ds \le n/ \log n$, and $\E X = ds (1+o(1))$ for $ds = o(n)$.  
We next use Chernoff's bound (\ref{eq:chern}) which implies that with probability $1-o(n^{-2})$ we have $\big| X - d s \big| \le \eps d s$ for $\eps = 3 \sqrt{\log n/(ds)}$. In particular we get that with probability $1-o(n^{-2})$ we have $X = ds(1+O(\log^{-1} n))$, provided $\log^3 n < ds < n/\log n$, and $X = ds(1+o(1))$ for $ds = o(n)$. We consider the BFS procedure up to the $i$'th neighbourhood provided that $d^i =o(n)$. Note that this implies that $i = O(\log n /\log \log n)$. Then the cumulative multiplicative error term  is 
$$ (1+o(1))^3 (1+O(\log^{-1}n))^i = (1+o(1)) (1+O(i \log^{-1}n)) = (1+o(1)).$$ 
(Note that it might take up to 2 iterations to reach at least $\log^3 n$ vertices to be able to use the error of $(1+O(\log^{-1}n))$ and possibly one more iteration when the number of vertices reached is $o(n)$ but is larger than $n/\log n$.) In other words, by a union bound over $1 \le j \le i$, with probability $1-o(n^{-1})$, we have $|N(v,j) \setminus N(v,j-1)|=d^j (1+o(1))$ for all $1\le j \le i$, provided that $d^i = o(n)$. By taking a union bound one more time (this time over all vertices), a.a.s.\ the same bound holds for all $v \in V$. Therefore, a.a.s., for all $v \in V$ and all $1 \leq j \leq i$, $|N(v,j)|=|S(v,j)|(1+O(1/d))$ and also $|N(v,j)|=d^j(1+o(1))$, and Part (i) follows.

We continue investigating the neighbourhood of a given vertex $v \in V$. It follows from the previous part that $s = |N(v,i)| = d^i (1+o(1))$ with probability $1-o(n^{-1})$. Let $c = c(n) = d^{i+1}/n = \Omega(1)$. This time, it is easer to focus on the random variable $Y$ counting the number of vertices outside of $N(v,i)$ that have \emph{no} neighbour in $N(v,i)$. It follows that 
\begin{eqnarray*}
\E Y &=& \left(1- \frac {d}{n-1} \right)^s (n-s) \\
&=& \exp \left( - \frac {ds}{n} (1+O(d/n)) \right) n (1+o(1)) 
= e^{-c (1+o(1))} n.
\end{eqnarray*}
If $c \ge (1+\eps) \log n$ for some $\eps>0$, then $\E Y \le n^{-\eps + o(1)} = o(1)$ and so a.a.s.\ $Y = 0$ by Markov's inequality, and (ii) holds. On the other hand, if $c \le (1-\eps) \log n$ for some $\eps>0$, then $\E Y \ge n^{\eps+o(1)}$ and so, using (\ref{eq:chern}) one more time, we get that with probability $1-o(n^{-1})$, $Y = (1+o(1)) e^{-c (1+o(1))} n$. Part (iii) holds by taking a union bound over all $v \in V$. 
\end{proof}

\bigskip

Now, the proof of Theorem~\ref{thm:mainRandom} is an easy consequence of the previous lemma together with some well-known results.
Let us first deal with the case $d=o(n)$. Before we start considering the three cases of (\ref{eq:i}), let us notice that it follows from the definition of $i$ that $d^{i-2}/n = O(\log n / d) = o(1)$. Since we aim for a result that holds a.a.s., we may assume that $G$ satisfies (deterministically) the properties from Lemma~\ref{lem:expansion} and Corollary~\ref{cor:diameter}.

Suppose that $d^{i-1}/n \ge (1+\eps) \log n$. It follows from Lemma~\ref{lem:expansion}(ii) that there exists $v \in V$ such that $N(v,i-1) = V$. In order to show $b(G) \le i$, it suffices to start burning the graph from $v$. On the other hand, by Lemma~\ref{lem:expansion}(i), regardless of the strategy used, the number of vertices burning after $i-1$ steps is at most 
$$
\sum_{j=0}^{i-2} d^j (1+o(1)) = d^{i-2} (1+o(1)) = o(n).
$$
Hence,  $b(G) \ge i$, and the first case is done for $d=o(n)$.

Suppose now that $(1-\eps) \log d \le d^{i-1}/n < (1+\eps) \log n$. Exactly the same argument as in the first case gives $b(G) \geq i$;  the number of vertices burning after $i-1$ steps is at most $d^{i-2}(1+o(1)) = o(n)$. By Corollary~\ref{cor:diameter}, the diameter of $G$ is at most $i$, and so $b(G) \leq i+1$. The second case is done for $d=o(n)$.

Finally, suppose that $c = d^{i-1}/n < (1-\eps) \log d$. It follows from Lemma~\ref{lem:expansion}(i) and (iii) that, no matter which burning sequence is used, after $i$ steps, the number of vertices \emph{not} burning is at least
\begin{eqnarray*}
e^{-c(1+o(1))} n - \sum_{j=0}^{i-2} d^j (1+o(1)) &\ge& \exp \Big(-(1-\eps + o(1)) \log d \Big) n - d^{i-2} (1+o(1)) \\
&=& \frac {n d^{\eps+o(1)}}{d} - \frac {cn}{d} (1+o(1)) \\
&\ge& \frac {n d^{\eps+o(1)}}{d} - \frac {n \log d}{d} \ge \frac {n d^{\eps+o(1)}}{2d} \to \infty.
\end{eqnarray*}
Hence, $b(G) \ge i+1$ and, since the diameter is at most $i$, we in fact have $b(G) = i+1$ and the third case is finished for $d=o(n)$.

\medskip

Now, let us consider $p = \Omega(1)$ and $p \le 1-(\log n + \log \log n + \omega)/n$. For this range of the parameter $p$, the diameter of $G$ is a.a.s.\ equal to 2, $i=2$, and we are still in the third case of~(1). Since $b(G)=1$ if only if the graph consists of one vertex, it is clear that for this range of $p$ the only two possibilities for the burning number are 2 and 3. Note that $b(G)=2$ if and only if there exists $v\in V$ such that $N(v,1)$ covers all but perhaps one vertex. Indeed, if we start burning the graph from $v$, all but at most one vertex is burning in the next round and the remaining vertex (in case it exists) can be selected as the next one to burn. On the other hand, if no such $v \in V$ exists, then no matter which vertex is selected as the starting one, there is at least one vertex not burning in the next round. This sufficient and necessary condition for having $b(G)=2$ is equivalent to the property that the complement of $G$ has a vertex of degree at most one (that is, the minimum degree of the complement of $G$ is at most one). It is well-known that the threshold for having minimum degree at least 2 is equal to $p_0 = (\log n + \log \log n)/n$. Hence, if $1-p \ge (\log n + \log \log n + \omega)/n$, then a.a.s.\ the minimum degree in the complement of $G$ is at least two, and so $b(G) = 3$ a.a.s. This finishes the proof of~\eqref{eq:i}.

For the range of $p$ given in~\eqref{eq:ii}, note that a.a.s.\ the minimum degree of the complement of $G$ is equal to one or two (recall that $\omega = o(\log \log n)$, and so the complement of $G$ is a.a.s.\ connected). Hence $b(G) \in \{2,3\}$ a.a.s. Finally, the range for $p$ given in~\eqref{eq:iii}) is below the critical window for having minimum degree $2$ in the complement of $G$; it follows that a.a.s.\ the minimum degree of the complement of $G$ is at most one, and so $b(G) = 2$ a.a.s. The proof of Theorem~\ref{thm:mainRandom} is finished.

\newpage

We get immediately the following corollary: 
\begin{corollary}
Let $d=d(n)=p(n-1) \gg \log n$ and let $G \in \mathcal{G}(n,p)$. Then, a.a.s.
$$
b(G) - D(G) \in \{0, 1\}.
$$
\end{corollary}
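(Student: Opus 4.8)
The plan is to read off the claim directly from Theorem~\ref{thm:mainRandom} together with the diameter results stated just before it (Lemma~\ref{lem:diameter} and Corollary~\ref{cor:diameter}), treating two ranges of $p$ separately according to whether $d = o(n)$ or $d = \Omega(n)$. The point is that the theorem already pins $b(G)$ down to one of at most two consecutive integers, and the companion diameter statements pin $D(G)$ down equally tightly; it remains only to check that in each case the two intervals for $b(G)$ and $D(G)+1$ overlap in a way forcing $b(G) - D(G) \in \{0,1\}$.

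First I would dispose of the sparse-ish range $\log n \ll d = o(n)$. Fix $i \ge 2$ as in the theorem, the smallest integer with $d^i/n - 2\log n \to \infty$; by minimality $d^{i-1}/n - 2\log n$ does not tend to $+\infty$. If moreover $d^{i-1}/n - 2\log n \to -\infty$, then Lemma~\ref{lem:diameter} gives $D(G) = i$ a.a.s., while Theorem~\ref{thm:mainRandom} gives $b(G) \in \{i, i+1\}$ a.a.s.\ (the second or third line of~\eqref{eq:i}, since $d^{i-1}/n < (1+\eps)\log n$ eventually), so $b(G) - D(G) \in \{0,1\}$. In the remaining boundary situation, where $d^{i-1}/n - 2\log n$ stays bounded or oscillates, one has both $d^{i-1}/n \ge (1-\eps')\log n$ and $d^{i-1}/n < (1+\eps)\log n$ for suitable small constants, so we are in the first or second line of~\eqref{eq:i}, giving $b(G) \in \{i-1, i, i+1\}$; simultaneously Corollary~\ref{cor:diameter} applied with exponent $i$ gives $D(G) \le i$, and a matching lower bound $D(G) \ge i-1$ follows from Lemma~\ref{lem:expansion}(i) (after $i-2$ steps of BFS only $d^{i-2} = o(n)$ vertices are reached, so some vertex is at distance $\ge i-1$ from it), whence $D(G) \in \{i-1, i\}$. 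One then matches the worst corners: the only way $b(G) - D(G)$ could fail to lie in $\{0,1\}$ would be $b(G) = i+1$ together with $D(G) = i-1$, and this is excluded because $b(G) = i+1$ forces (third line of~\eqref{eq:i}) $d^{i-1}/n < (1-\eps)\log d$, hence $d^{i-1}/n - 2\log n \to -\infty$, hence $D(G) = i$ by Lemma~\ref{lem:diameter}, a contradiction; so the relevant cases are $b(G) \in \{i-1,i\}$ with $D(G) \in \{i-1,i\}$ matched appropriately, all giving a difference in $\{0,1\}$.

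For the dense range $d = \Omega(n)$ — equivalently $p$ bounded away from $0$ — the argument already given in the proof of Theorem~\ref{thm:mainRandom} shows $D(G) = 2$ a.a.s.\ and $b(G) \in \{2,3\}$ a.a.s.\ when $p \le 1 - (\log n + \log\log n - \omega)/n$, so $b(G) - D(G) \in \{0,1\}$; and when $p > 1 - (\log n + \log\log n - \omega)/n$ we have $b(G) = 2$ and still $D(G) = 2$, giving difference $0$. Combining the two ranges proves the corollary. The only mildly delicate point — the ``main obstacle'' — is the oscillating boundary case in the sparse range, where neither Lemma~\ref{lem:diameter} nor the first/third line of~\eqref{eq:i} applies cleanly; there one must argue via the two-sided consequence of Lemma~\ref{lem:expansion}(i) that $D(G)$ and $b(G)$ are each confined to $\{i-1, i\}$ and that the extreme mismatch is incompatible with the hypotheses placing us in that case. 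Everything else is bookkeeping over the cases of the theorem.
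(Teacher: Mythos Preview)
Your route is more involved than the paper's and has a gap. The paper treats the corollary as immediate, and the reason is that one direction is already deterministic: the introduction records $b(G) \le D(G)+1$ for every connected graph. So the only thing to check is $b(G) \ge D(G)$ a.a.s., and this drops straight out of what is already proved: Corollary~\ref{cor:diameter} gives $D(G) \le i$ a.a.s., while every branch of Theorem~\ref{thm:mainRandom} (all three lines of~\eqref{eq:i}, and~\eqref{eq:ii},~\eqref{eq:iii} with $i=2$) gives $b(G) \ge i$ a.a.s. Hence $b(G) \ge i \ge D(G)$, and together with $b(G) \le D(G)+1$ the corollary follows. There is no need to resolve $D(G)$ exactly, nor to worry about oscillation of $d^{i-1}/n - 2\log n$.

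Because you do not invoke the trivial bound $b(G) \le D(G)+1$, you are forced to pin down both quantities and then match ranges, and this is where the argument leaks. In your boundary case you write $b(G) \in \{i-1,i,i+1\}$, but the first two lines of~\eqref{eq:i} only ever yield $b(G) \in \{i,i+1\}$; the value $i-1$ is spurious. Having (incorrectly) allowed $b(G)=i-1$ while also allowing $D(G)=i$, you now face the possibility $b(G)-D(G)=-1$, which you never exclude: you only rule out the pairing $(b(G),D(G))=(i+1,i-1)$ and then assert the remaining cases are ``matched appropriately, all giving a difference in $\{0,1\}$'', which is not true for $(i-1,i)$. A second slip: in the very dense range $p > 1-(\log n+\log\log n-\omega)/n$ you claim $D(G)=2$, but for $1-p = o(n^{-2})$ the graph is a.a.s.\ complete and $D(G)=1$. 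Both issues evaporate once you use the deterministic bound $b(G)\le D(G)+1$ and argue only the lower inequality $b(G)\ge D(G)$ as above.
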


Let us mention that, in some sense, the corollary is best possible. If, for example, $i \in \N$ is such that $d^{i-1}/n = \frac 32 \log n$, then a.a.s.\ $b(G)=D(G)=i$. On the other hand, if, say, $d^{i-1}/n = 1$, then a.a.s.\ $b(G)$ is larger than $D(G)$ (in fact, a.a.s.\ $b(G) = i+1$ and $D(G) = i$).

\section{Proof of Theorem~\ref{thm:mainRGG}}\label{sec:geometric}

This section deals with our result on random geometric graphs. 
We show lower and upper bounds separately and start with the lower bound.

\begin{lemma}\label{lem:lb}
Let $G\in\RG$ with $r \geq r_c$. Then, a.a.s.,
$$b(G)=\Omega\left(r^{-2/3}\right).$$
\end{lemma}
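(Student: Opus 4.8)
The plan is to exploit the geometric nature of the graph: because every vertex sees only those vertices within Euclidean distance $r$, a fire started at $x$ reaches in $t$ steps only vertices whose graph distance from $x$ is at most $t$, and each such vertex lies within Euclidean distance $tr$ of $x$. Hence after a burning sequence $(x_1,\dots,x_k)$ has run for $k$ rounds, every burned vertex lies in one of the $k$ disks $B(x_j, (k-j+1)r)$, and the total burned area is at most $\sum_{j=1}^k \pi((k-j+1)r)^2 = \pi r^2 \sum_{\ell=1}^k \ell^2 = \Theta(r^2 k^3)$. For the burning process to finish in $k$ rounds, these disks must cover every vertex of $G$. Thus I would show that if $k = o(r^{-2/3})$, then $\Theta(r^2 k^3) = o(1)$, and a.a.s.\ there is a point of $\SR$ (hence, with positive probability, a vertex) that is left uncovered, contradicting the assumption that the process ends in $k$ rounds.

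First I would make the "area argument" quantitative. Cover $\SR = [0,1]^2$ by a grid of $\Theta(1/r^2)$ cells of side length $\Theta(r)$ (say side $r/4$, small enough that any two vertices in the same or adjacent cells are adjacent in $G$). Since $r \ge r_c = \sqrt{\log n/(\pi n)}$, a standard first-moment / Chernoff computation shows that a.a.s.\ every such cell contains $\Theta(nr^2) = \Omega(\log n)$ vertices; in particular every cell is nonempty. Now suppose toward a contradiction that $b(G)\le k$ for some $k$ with $k^3 r^2$ small (to be fixed). The union of the $k$ burned disks has area at most $\pi r^2\sum_{\ell=1}^{k}\ell^2 \le \pi r^2 k^3$; choosing $k \le \delta r^{-2/3}$ for a sufficiently small constant $\delta>0$ makes this area at most $\pi\delta^3 < 1/2$, so at least half of $\SR$, and in particular at least one full grid cell, is disjoint from all $k$ disks. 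That cell contains a vertex, and that vertex is therefore never burned within $k$ rounds — contradiction. Hence a.a.s.\ $b(G) > \delta r^{-2/3}$, which is the claim.

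The main obstacle, and the point requiring the most care, is the transition from "area of $\SR$ not covered" to "a vertex of $G$ not covered." The $k$ disks are not fixed in advance; their centres $x_j$ depend on the (adversarial) burning sequence, which may itself depend on the random point set. I would handle this by the grid discretisation above: once we condition on the a.a.s.\ event that every one of the $\Theta(1/r^2)$ grid cells is nonempty, the conclusion becomes deterministic — any placement of $k$ disks of total area $<1/2$ must miss a cell entirely (a cell of side $r/4$ has area $r^2/16$, so even accounting for the $O(k/r) \cdot r = O(k)$ cells merely "clipped" at their boundary, as long as $k = o(1/r)$ — which holds since $r^{-2/3} = o(r^{-1})$ — a constant fraction of cells lie wholly outside the union), and that cell's vertex is unburned. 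The remaining details are the routine Chernoff bound for cell occupancy and bookkeeping of constants; I would also note $r \ge r_c$ guarantees $nr^2 \ge \log n/\pi \to\infty$, which is all that the occupancy estimate needs.
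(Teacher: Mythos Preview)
Your approach is the paper's: burned vertices lie in Euclidean disks of radii $(k-j)r$ about the $x_j$, the total area is $O(r^2k^3)$, and a grid discretisation together with cell occupancy shows that some cell (hence some vertex) is missed whenever $k$ is a small multiple of $r^{-2/3}$. The only structural difference is that the paper tessellates at scale $s=4\sqrt{\log n/n}=\Theta(r_c)$, independent of $r$, rather than at scale $\Theta(r)$.

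One step in your write-up fails as stated. With cells of side $r/4$, at $r=r_c$ a cell has expected occupancy $n(r_c/4)^2=\log n/(16\pi)\approx 0.02\log n$, so the Chernoff bound~\eqref{eq:chern} yields only $\Prob(X_C\le\tfrac12\E{X_C})\le 2n^{-1/(192\pi)}$, which is far too weak to survive a union bound over the $\Theta(1/r_c^2)=\Theta(n/\log n)$ cells; your claim that a.a.s.\ every cell is nonempty is therefore not justified for this constant. The repair is immediate: enlarge the side length (e.g.\ to $7r$), or do what the paper does and fix the cell side at $\Theta(r_c)$ with a sufficiently large constant regardless of $r$. (Your stated motivation for side $r/4$ --- that vertices in adjacent cells be $G$-adjacent --- plays no role in a lower-bound argument and should be dropped.) Separately, your boundary-cell bookkeeping ``$O(k/r)\cdot r=O(k)$'' is garbled; summing the perimeters of all $k$ disks and dividing by the cell side gives $O(k^2)$ clipped cells, not $O(k)$. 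Since $k^2=O(r^{-4/3})=o(r^{-2})$ this is still a vanishing fraction of the $\Theta(r^{-2})$ cells, so the conclusion survives once the constants are set correctly.
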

\begin{proof}
Suppose that we are given a graph $G \in \RG$. Tessellate $\SR$ into sub-squares of side length $s=4 \sqrt{\log n/n} = \Theta(r_c)$ which we call \emph{cells} (the rightmost column and the bottommost row might contain slightly bigger cells, in case en equal tessellation is not possible). For a cell $C$, denote by $X_C$ the random variable counting the number of vertices of $G$ inside $C$. Clearly, $X_C \sim {\rm Bin}(n,p)$ with $p$ being the area of $C$, and thus, by~\eqref{eq:chern},
\begin{equation*}
\Prob(X_C\le \E {X_C}/2) \le 2e^{-(1/2)^2 s^2 n/3} = o(n^{-1}).
\end{equation*}
By a union bound over all $(\lfloor 1/s \rfloor)^2 =O(n/\log n)$ cells, a.a.s.\ this holds for all cells.
Suppose now that $b(G)=t \geq 1$ for some $t$ to be defined later. For $i=1,2,\ldots,t$, let $v_i$ to be vertex chosen to be burned at at the $i$-th step. Note that in order for a vertex $w$  to be burned by time $t$ due to the initial burning of $v_i$, we must have $d_E(w,v_i) \leq (t-i)r$. Letting $B_i$ denote the subarea of $\SR$ containing vertices that potentially can be burned due to the burning of $v_i$, we clearly have $\mbox{area}(B_i) \leq  (t-i)^2 r^2 \pi$. Hence, for $r=\omega(r_c)$ or $t-i=\omega(1)$ the number of cells that have non-empty intersection with $B_i$ is at most 
$$
(1+o(1))\frac{(t-i)^2 r^2 \pi}{s^2},
$$
and for $r\geq r_c$ and $t-i=\Theta(1)$ the number of cells with non-empty intersection with $B_i$ is at most
$$
C_0 \frac{(t-i)^2 r^2 \pi}{s^2},
$$
where $C_0 > 1$ is a large enough absolute constant. (For example, $C_0=25 \cdot 16$ is sufficient. Indeed, one can distinguish between \emph{regular} cells that either contain $v_i$, the center of $B_i$, or are completely contained in $B_i$; the remaining cells having non-empty intersection with $B_i$ are called \emph{boundary} cells. Note that every boundary cell is at $L_1$-cell-distance at most $2$ of a regular cell of $B_i$, and thus the total number of cells is at most $25$ times the number of regular cells. Finally, note that the area of regular cells is at most $s^2/(\pi r_c^2) \cdot \mbox{area}(B_i) = 16 \cdot \mbox{area}(B_i)$; the extreme case corresponds to $r=r_c$ and $i=t-1$.) Thus, in any case, the number of cells having non-empty intersection with at least one region $B_i$ by time $t$ is at most
$$
\sum_{i=1}^t C_0 \frac{(t-i)^2 r^2 \pi}{s^2} \le C_0 \frac{t^3 r^2 \pi}{3s^2}.
$$
Therefore, since there are $(1+o(1)) s^{-2}$ cells in total, if (for example) $t \leq \left(2/ (C_0 \pi r^2) \right)^{1/3} $, at least one cell has empty intersection with $\bigcup_{i=1}^t B_i$ by time $t$. Since a.a.s.\ each cell contains at least one vertex, a.a.s.\ at least one vertex is not burning by time $t$ in $\RG$. \end{proof}

\medskip

For the upper bound, we will use the result from~\cite{Diaz}. The results there are stated in the model of a square of side length $\sqrt{n}$, but they can be easily translated to our setting. (In fact, in~\cite{Diaz}, a more precise result was shown, but for our purpose the following version is enough.)

\begin{theorem}[\cite{Diaz}] \label{thm:ownresult}
Let $G\in\RG$. The following holds a.a.s. If $r \geq Cr_c$ with $C$ sufficiently large, then there exists $C'=C'(C)$ such that for every pair of vertices $u,v \in V(G)$ we have 
$$ d_G(u,v) \le  C'\frac{d_E(u,v)}{r}. $$
\end{theorem}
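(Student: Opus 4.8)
The plan is to prove Theorem~\ref{thm:ownresult} (a \emph{near-geodesic} property of dense random geometric graphs) by the same tessellation idea already used for Lemma~\ref{lem:lb}; this is essentially the argument of~\cite{Diaz}, stripped down to the crude bound we actually need. First I would tessellate $\SR=[0,1]^2$ into axis-parallel squares --- \emph{cells} --- of side length $s=r/4$; as in Lemma~\ref{lem:lb}, the rightmost column and bottom row may be slightly larger, which is harmless (and can be avoided by shrinking $s$ imperceptibly). The constant $1/4$ is chosen so that any two points lying in the same cell, or in two cells that share at least a corner, are at Euclidean distance at most $2\sqrt2\,s<r$, hence joined by an edge of $G$. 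Since $r\ge Cr_c=C\sqrt{\log n/(\pi n)}$, the expected number of vertices in a fixed cell equals $s^2n=(C^2/16\pi)\log n$, which exceeds any prescribed multiple of $\log n$ once $C$ is large enough; so by the Chernoff bound~\eqref{eq:chern} a fixed cell is empty with probability at most $\exp(-\Omega(s^2n))=n^{-\Omega(C^2)}$, which is $o(n^{-2})$ for $C$ sufficiently large. A union bound over the $O(n/\log n)$ cells then shows that a.a.s.\ \emph{every} cell is nonempty, and from here on I would condition on this event, so that all remaining claims are deterministic and hold simultaneously for all pairs of vertices.

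Next, fix $u,v\in V(G)$ and follow the straight segment $\overline{uv}$ from $u$ to $v$. It meets a sequence of cells $C_0\ni u,\ C_1,\ \ldots,\ C_m\ni v$ in which consecutive cells always share at least a corner, and since a segment of Euclidean length $L$ crosses at most $L/s+1$ vertical and at most $L/s+1$ horizontal grid lines, we have
$$
m \;\le\; \frac{2\,d_E(u,v)}{s} + 2 \;=\; \frac{8\,d_E(u,v)}{r} + 2 .
$$
Choose an arbitrary vertex $w_k\in C_k$ for $1\le k\le m-1$ (which exists by the conditioning), and set $w_0:=u$, $w_m:=v$. By the choice of $s$, $d_E(w_k,w_{k+1})\le 2\sqrt2\,s<r$ for every $k$, so $w_kw_{k+1}\in E(G)$; hence $w_0w_1\cdots w_m$ is a walk in $G$ and $d_G(u,v)\le m$. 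When $d_E(u,v)\ge r$ the additive $+2$ is absorbed into the multiplicative constant, giving $d_G(u,v)\le C'\,d_E(u,v)/r$ with, say, $C'=10$; when $d_E(u,v)<r$, $uv$ is itself an edge and $d_G(u,v)=1$. Either way the bound holds up to an additive constant --- and if one insists on the homogeneous form as stated, it suffices to enlarge $C'$ and restrict to $d_E(u,v)\ge r$, which is the only range used when the theorem is applied to bound $b(G)$.

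I do not expect a genuine obstacle. The only probabilistic content is the single assertion ``a.a.s.\ every cell is nonempty,'' and this is exactly where the hypothesis $r\ge Cr_c$ with $C$ large is indispensable: one needs $s^2n$ to be a sufficiently large multiple of $\log n$ so that the per-cell failure probability $\exp(-\Omega(s^2n))$ beats the union bound over $\Theta(s^{-2})=\Theta(n/\log n)$ cells. Everything else is elementary plane geometry --- counting the cells a segment meets, and using corner-adjacency of cells to keep consecutive representatives within distance $r$. The two points deserving a little care are the slightly oversized boundary cells (handled as in Lemma~\ref{lem:lb}) and the bookkeeping showing that $C'$ depends only on $C$ (through the choice $s=r/4$ and the constants above), exactly as claimed.
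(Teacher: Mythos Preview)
The paper does not prove Theorem~\ref{thm:ownresult} at all: it is quoted from~\cite{Diaz} as a black box and used immediately in the proof of Lemma~\ref{lem:ub}. So there is no ``paper's own proof'' to compare against.

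That said, your argument is correct and is precisely the standard tessellation proof one gives for this kind of near-geodesic statement. The choice $s=r/4$ makes corner-adjacent cells fit inside a disk of radius $2\sqrt{2}s<r$; the Chernoff/union-bound step is exactly where ``$C$ sufficiently large'' enters, since you need $s^2n\ge (C^2/16\pi)\log n$ to exceed $(1+\delta)\log n$ for some $\delta>0$ in order to beat the $O(n/\log n)$ cells; and the cell-count along $\overline{uv}$ via grid-line crossings is the clean way to get $m=O(d_E(u,v)/r)$. Your handling of the boundary cells and of the degenerate regime $d_E(u,v)<r$ is also fine, and you are right that the homogeneous inequality as literally stated cannot hold for arbitrarily close distinct vertices---the application in Lemma~\ref{lem:ub} only ever invokes it for pairs with $d_E(u,v)=\Theta(r^{1/3})\gg r$, so this causes no trouble. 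One cosmetic point: $s^2n\ge (C^2/16\pi)\log n$ rather than equality, since the hypothesis is $r\ge Cr_c$; but the direction of the inequality is the one you need.
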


We are now ready to prove the upper bound.
\begin{lemma}\label{lem:ub}
Let $G\in\RG$ with $r \geq Cr_c$ for $C$ sufficiently large. Then, a.a.s.,
$$b(G)=O\left(r^{-2/3}\right).$$
\end{lemma}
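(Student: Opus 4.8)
The plan is to mirror the strategy used for the lower bound: tessellate the unit square $\SR$ into cells, but now the cells should be \emph{large} — of side length roughly $\Theta(r^{2/3})$ rather than $\Theta(r_c)$ — so that there are only $\Theta(r^{-4/3})$ of them. The idea is that a burning sequence of length $t = \Theta(r^{-2/3})$ can afford to "visit" one representative vertex in each cell, spending about $r^{-2/3}$ of its budget traversing between cell representatives and the remaining $r^{-2/3}$ of its budget letting each fire spread across its own cell. So the key parameter balance is: number of cells $\asymp$ (diameter of a cell in graph distance) $\asymp r^{-2/3}$, which forces the cell side length to be $\asymp r^{2/3}$, and indeed $r^{-4/3}$ cells each requiring $r^{-2/3}$ steps to burn internally gives total time $r^{-4/3}+r^{-2/3} = \Theta(r^{-2/3})$ if we are careful.

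First I would fix the cell side length $s = \beta r^{2/3}$ for a suitable constant $\beta$ and tessellate $\SR$ into at most $(1/s+1)^2 = (1+o(1))\beta^{-2} r^{-4/3}$ cells; note this requires $s \le 1$, which holds since $r \to 0$. As in Lemma~\ref{lem:lb}, a Chernoff bound plus a union bound shows that a.a.s.\ every cell is non-empty (in fact contains $\Theta(ns^2)$ vertices), since $ns^2 = \Theta(n r^{4/3}) \gg \log n$ because $r \ge Cr_c$ means $nr^2 \gg \log n$ and hence $nr^{4/3} \gg (\log n)^{2/3} (nr^2)^{1/3}/(nr^2)^{1/3}\cdot\ldots$ — more simply, $nr^{4/3} = (nr^2)^{2/3} n^{1/3} \to \infty$. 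Next I would pick one vertex from each non-empty cell, call it the cell's \emph{center}, and order the centers $w_1, w_2, \ldots, w_L$ (with $L = O(r^{-4/3})$) along a boustrophedon/snake path through the grid of cells, so that consecutive centers $w_j, w_{j+1}$ lie in adjacent cells and hence $d_E(w_j, w_{j+1}) = O(s) = O(r^{2/3})$. By Theorem~\ref{thm:ownresult}, a.a.s.\ $d_G(w_j, w_{j+1}) \le C' d_E(w_j,w_{j+1})/r = O(r^{2/3}/r) = O(r^{-1/3})$; summing over the whole snake, the graph distance from $w_1$ to $w_L$ is $O(L \cdot r^{-1/3}) = O(r^{-4/3} \cdot r^{-1/3}) = O(r^{-5/3})$ — which is too big.

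So the snake idea in its naive form fails, and the real content is to \emph{not} connect the centers by a single path but to select them directly as the burning sequence. The right approach: choose the burning sequence to be $x_j = w_j$ for $j = 1, \ldots, L$, i.e.\ at step $j$ we ignite the center of the $j$-th cell. This is legal (we are allowed to ignite any unburned vertex). After step $L = O(r^{-4/3})$, every cell contains a vertex ignited no later than step $L$. Now I must show that by some time $t = O(r^{-2/3})$ every vertex of $G$ is burning. Fix a vertex $u$, lying in cell $C_j$ whose center $w_j$ was ignited at step $j \le L$. Since a.a.s.\ the graph $G$ restricted to (a slight neighborhood of) $C_j$ is connected with diameter $O(s/r) = O(r^{-1/3})$ — this again follows from Theorem~\ref{thm:ownresult} applied to $u$ and $w_j$, giving $d_G(u,w_j) \le C' d_E(u,w_j)/r = O(s/r) = O(r^{-1/3})$ — the fire from $w_j$ reaches $u$ within $O(r^{-1/3})$ additional steps, i.e.\ by time $j + O(r^{-1/3}) \le L + O(r^{-1/3}) = O(r^{-4/3})$. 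Unfortunately this is again $O(r^{-4/3})$, not $O(r^{-2/3})$. The resolution — and this is the genuine obstacle — is that one cannot afford $r^{-4/3}$ cells with a budget of only $r^{-2/3}$ steps; instead one must use a \emph{two-scale} or recursive argument: ignite, within the first $\Theta(r^{-2/3})$ steps, one vertex in each of $\Theta(r^{-2/3} \cdot \text{(something)})$ cells, but arranged so that the cells ignited late are small (since a late-ignited fire has little time to spread) and cells ignited early are correspondingly handled. Concretely, partition $[0, r^{-2/3}]$ into the "travel budget" and let cell $C_j$ (the $j$-th ignited) be assigned radius budget $r^{-2/3} - (\text{graph-distance cost of reaching } w_j)$; by choosing the centers greedily so that $w_{j+1}$ is within graph-distance $O(1)$ of the already-burned region, one ensures $d_G(w_1, w_j) = O(j)$, and then one needs the cells to tile $\SR$ with $C_j$ having Euclidean radius $\asymp r \cdot (r^{-2/3} - \Theta(j)) $; the total area covered is $\sum_{j=1}^{t} \Theta\big(r^2 (r^{-2/3}-j)^2\big) = \Theta(r^2 \cdot r^{-2} ) = \Theta(1)$ when $t = \Theta(r^{-2/3})$, which exactly covers $\SR$. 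Thus the main obstacle is organizing this greedy cover — showing that a.a.s.\ one can always find the next center $w_{j+1}$ at bounded graph-distance from the burned set while simultaneously controlling the Euclidean geometry so the assigned disks cover the whole square — and the key tools are the cell non-emptiness estimate and, crucially, Theorem~\ref{thm:ownresult} converting Euclidean distances to graph distances with only a $1/r$ blow-up; once those are in hand, the calculation $\sum (t-j)^2 r^2 = \Theta(1)$ at $t = \Theta(r^{-2/3})$ closes the argument and matches the lower bound of Lemma~\ref{lem:lb}.
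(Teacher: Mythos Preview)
Your very first idea is exactly the paper's proof, but you solved for the cell side length incorrectly and then chased increasingly elaborate fixes for a problem that does not exist.

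You correctly identify the balance you want: with cells of side length $s$, the number of cells is $\Theta(s^{-2})$ and the graph-diameter of a cell is (via Theorem~\ref{thm:ownresult}) $\Theta(s/r)$; you want both to be $\Theta(r^{-2/3})$. Setting $s^{-2}=s/r$ gives $s^3=r$, i.e.\ $s=\Theta(r^{1/3})$, \emph{not} $s=\Theta(r^{2/3})$. With $s=\Theta(r^{1/3})$ you have $\Theta(r^{-2/3})$ cells, each of graph-diameter $\Theta(r^{-2/3})$. The paper does precisely this: take side length $Ar^{1/3}$ with $A=(3C'\sqrt{2})^{-1/3}$, so there are $s_0=\Theta(r^{-2/3})$ cells; a Chernoff/union bound shows every cell is non-empty a.a.s.; in phase~1 (the first $s_0$ steps) ignite one vertex per cell; in phase~2 (the next $s_0$ steps) the fire in each cell spreads across it, since any two vertices in a common cell are at Euclidean distance $\le 2\sqrt{2}Ar^{1/3}$ and hence graph distance $\le 2\sqrt{2}AC'r^{-2/3}\le s_0$. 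Done in $2s_0=O(r^{-2/3})$ steps.

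Your subsequent snake-path and greedy variable-radius cover are unnecessary detours forced by the wrong exponent. (The greedy cover sketch at the end could in principle be made to work, but as written it has a real gap: you assert one can always find the next center at bounded graph distance from the burned set while the associated disks tile $\SR$, yet you give no mechanism for this --- it is not clear why the uncovered region always meets the burned region's graph-neighbourhood, nor why the variable-radius disks can be placed to actually cover $\SR$ rather than merely having enough total area.) The fix is not more machinery but the arithmetic correction $s=r^{1/3}$.
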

\begin{proof}  
Let $C'=C'(C)$ be such that by Theorem~\ref{thm:ownresult}(ii), a.a.s.\ for every pair of vertices $u,v$ we have $d_G(u,v) \le C'\frac{d_E(u,v)}{r}.$ For convenience, define $A=(3C' \sqrt{2})^{-1/3}$. As in the proof of the previous lemma, for a given $G \in \RG$, we consider a partition of $\SR$ into cells of side length $A r^{1/3}$ (as before, the rightmost column and the bottommost row might contain slightly bigger cells, in case an equal tessellation is not possible; clearly, the side length of these cells is at most $2Ar^{1/3}$). Note that $\SR$ contains $s = (\lfloor 1/(Ar^{1/3}) \rfloor)^2=\Theta(r^{-2/3})$ cells. Moreover, since each cell has area $\Theta(r^{2/3})$, it contains in expectation $\Theta(nr^{2/3})=\Omega(n^{2/3} \log^{1/3} n)$ many vertices (recall that $r_c=\sqrt{\log n/(\pi n)}$). By~\eqref{eq:chern}, together with a union bound over all $\Theta(r^{-2/3})=O((n/\log n)^{1/3})$ cells, a.a.s.\ each cell contains at least one vertex.

Now, during the first phase consisting of $s$ time steps, we choose in each cell one vertex to be burned. As we aim for an upper bound, we may assume that these vertices are the only ones burning at the end of the first phase. The second phase again consists of $s$ time steps. It suffices to show that each vertex that was selected during the first phase will make all vertices in its cell burned at the and of phase $2$. Note that for any two vertices $u,v$ in a cell, the Euclidean distance between them is at most $2\sqrt{2}Ar^{1/3}$. It follows that a.a.s., for any pair of vertices $u,v \in \RG$, 
$$
d_G(u,v) \leq C'\frac{d_E(u,v)}{r} \leq  2\sqrt{2}AC'r^{-2/3}  \leq s,
$$ 
where the last inequality follows from our choice of $A$. Hence, during the second phase, a.a.s.\ all vertices are burned, and the upper bound follows.
\end{proof}

\section{Proof of Theorem~\ref{thm:mainGrid}}\label{sec:grid}

The lower bound is straightforward. Note that for any vertex $v \in V(P_{m} \square P_n)$ the number of vertices of a ball of radius $r$ around $v$, $|N(v,r)|$, is equal to  
$$
|N(v,r)| \leq 1+4+8+\ldots+4r = 1+2(r+1)r.
$$
Hence, at time $k$ the number of vertices burning is at most the sum of the total number of vertices included in some ball of radii $0,1,\ldots, k-1$, which is is at most
\begin{eqnarray*}
f(k) &=& \sum_{r=0}^{k-1} ( 1+2r+2r^2 ) = 2k^3/3 + k/3.
\end{eqnarray*}
Clearly, if $f(k_0) < mn$, then $\lfloor k_0 \rfloor$ balls cannot cover the whole graph and so $b(P_{m} \square P_n) \ge \lceil k_0 \rceil$. The desired inequality holds for $k_0 = (3mn/2)^{1/3} - 1$, since we have
$$
f(k_0) = \frac 23 k_0^3 + \frac{k_0}{3} 
< \frac23 (k_0+1)^3
= mn.
$$
Hence, for $\sqrt{n} \leq m \leq n$ we obtain the following useful lower bound: 
$$b(P_m \square P_n) \ge (3/2)^{1/3} (mn)^{1/3} - 1.$$

On the other hand, it is easy to see that $b(P_m \square P_n) \ge b(P_n) \ge \sqrt{n}$. To see this, one can focus on a path on $n$ vertices on the border of the grid. As any ball of radius $r$ (centered on any vertex of the grid, not necessarily on the path we focus on) contains at most $2r+1$ vertices from the path, the total number of vertices on the path burning at time $\lfloor \sqrt{n} \rfloor$ is
$$
\sum_{r = 0}^{\lfloor \sqrt{n} \rfloor - 1} (2r+1) \le \frac {2 \sqrt{n}-1}{2} \sqrt{n} < n.
$$
It follows that $b(P_m \square P_n) \ge \sqrt{n}$, which is useful for $m < \sqrt{n}$.

\medskip

Now, let us move to the upper bound. Suppose first that $m = \gamma(n) \sqrt{n}$, where $1 \le \gamma = \gamma(n) \le \sqrt{n}$. We will show that it is possible to cover $P_{m} \square P_n$ with balls of radii between $k_1$ and $k_2$, where
$$
k_1 = 	\frac {(mn)^{1/3}}{\gamma^{1/6}} \ \ \ \ \  \text{ and } \ \ \ \ \  k_2 = \left( \frac 32 \right)^{1/3} (mn)^{1/3} \left( 1 + \frac {C}{\gamma^{1/6}} \right),
$$
where $C\ge 1$ is some large constant that will be determined soon. This clearly implies that $b(P_m \square P_n) \le k_2+1$. (Let us note that we are not optimizing the error term here, aiming for an easy argument.)

\begin{figure}[htbp]
\includegraphics[width=0.3\textwidth]{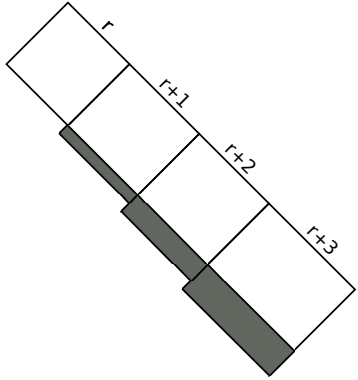} 
\caption{Covering the grid with diagonal strips \label{fig:strip} }
\end{figure}

We will cover the graph with diagonal `strips' using radii $k_1 \le r \le k_2$. More precisely, we choose the strips in the following way: the rightmost diagonal strip has the top right corner as its center, and is of radius $k_1$. Then, as long as the top border line of the grid (of length $n$) is not yet covered, do the following: given $i \geq 1$ strips already defined with $r_i$ being the last radius used in strip $i$, the $(i+1)$-st  strip is such that the topmost ball of the $(i+1)$-st strip has as its rightmost vertex the left neighbour of the leftmost vertex of the topmost ball of strip $i$, and has radius $r_i+1$. The radii of balls increase always by $1$ inside a strip (see Figure~\ref{fig:strip}) but the width of the $(i+1)$-st  strip is equal to $r_i+1$.  If the top border line of the grid is now completely covered but the left border line (of length $m$) is not yet completely covered, then we proceed as follows. Given $i \geq 1$ strips already defined with $r_i$ being the last radius used in strip $i$, choose the topmost vertex on the left border line that is not covered by the first ball of strip $i$ to be the topmost vertex of strip $i+1$; as before the strip has radius $r_i+1$, and the radii of balls always increase by $1$ inside a strip.

Let us concentrate on a strip consisting of balls of radii $r, r+1, r+2, \ldots$ (see Figure~\ref{fig:strip}). Instead of estimating the number of vertices covered (see white rectangles on the figure), it is easier to bound from above the number of vertices ``wasted'' (see grey rectangles on the figure), that is, vertices that will be (double) covered by the next strip or that will fall outside the boundary of the grid. Since the number of balls forming a strip whose smallest ball has radius $r$ is $O(m/r)$, the largest ball in the strip has radius $r+O(m/r) = (1+o(1)) r$ (as $r = \Omega(k_1) = \Omega(\gamma^{1/6} \sqrt{n})$ and $O(m/r) = O(m/k_1) = O(\gamma^{5/6}) = O(n^{5/12})$). Hence, the number of vertices wasted in the last ball of the strip is at most $(1+o(1)) r \cdot O(m/r) = O(m)$, and so the number of vertices wasted for this strip is 
$$
O(r^2) + O(m^2/r) = O(k_2^2) + O(m^2 / k_1).
$$
(The $O(r^2)$ term corresponds to the area wasted due to the fact that some balls touch the border; note that, by construction, each strip has at most 3 balls that cover the area outside of the grid, namely the first one and possibly the two last ones.) Since the number of strips is $O(n/k_1)$, the total number of vertices wasted is 
$$
( O(k_2^2) + O(m^2 / k_1) ) \cdot O(n/k_1) = O(n^{3/2} \gamma^{1/2}) + O(n \gamma^{5/3}).
$$ 
It remains to show that the number of vertices that are \emph{not} wasted is at least $mn$. The number of vertices in balls with radii between $k_1$ and $k_2$ (in the perfect situation when one pretends they are all disjoint) is 
\begin{align*}
\sum_{r = k_1}^{k_2} & (1+2r+2r^2) = \frac{2(k_2+1)^3}{3}+\frac{k_2+1}{3} - \frac{2k_1^3}{3}-\frac{k_1}{3} > \frac 23 (k_2^3 - k_1^3) \\ 
&\ge  \frac 23 \left(  \frac 32 mn + \frac {9 C}{2 \gamma^{1/6}} mn - \frac {mn}{\gamma^{1/2}} \right) \ge mn + \frac {9C-2}{3 \gamma^{1/6}} mn = mn + \frac {9C-2}{3} n^{3/2} \gamma^{5/6},
\end{align*}
which is large enough to guarantee that even if $O(n^{3/2} \gamma^{1/2} + n \gamma^{5/3})$ vertices are ``wasted'', all vertices of the graph are covered: indeed, if $\gamma=O(1)$, then by choosing $C$ large enough, the term $\frac {9C-2}{3} n^{3/2} \gamma^{5/6}$ can be made bigger than the error term; for $\gamma=\omega(1)$ and $\gamma =o(n^{3/7})$, clearly $C=1$ is enough, since the dominating error term is $O(n^{3/2} \gamma^{1/2})$, which is clearly of smaller order than $n^{3/2} \gamma^{5/6}$, and finally for $\gamma=\Omega(n^{3/7})$ (but still $\gamma \leq \sqrt{n}$), also $C=1$ is enough, since the error term is  $O(n \gamma^{5/3})$, which is also of smaller order than $n^{3/2} \gamma^{5/6}$.

The case $m < \sqrt{n}$ is easy. Consider a path on $n$ vertices on the border of the grid. Set a fire on vertices at distance $\sqrt{n}$ from each other, and then after at most $2 \sqrt{n}$ steps the whole grid is burnt. The upper bound of $O(\sqrt{n})$ holds and the proof of Theorem~\ref{thm:mainGrid} is finished. $\hfill \square$

\bigskip

Let us notice that the proof of the lower bound works for the toroidal grid $C_m \square C_n$ with no adjustment needed. Moreover, as $P_m \square P_n$ is a spanning subgraph of $C_m \square C_n$, we have $b(C_m \square C_n) \le b(P_m \square P_n)$, and so the following corollary holds.

\begin{corollary}
$$b(C_m \square C_n) =
\begin{cases}
 (1+o(1)) (3/2)^{1/3} (mn)^{1/3}, & \text{ if } \sqrt{n} \ll m \le n, \\
 \Theta(\sqrt{n}), & \text{ if } m = O(\sqrt{n}).
\end{cases}
$$
\end{corollary}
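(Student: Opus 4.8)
The plan is to derive both bounds almost directly from Theorem~\ref{thm:mainGrid}, the only extra ingredient being the elementary observation that the burning number cannot increase when edges are added: if $H$ is a spanning subgraph of $G$, then $b(G)\le b(H)$. To see this, take an optimal burning sequence $(x_1,x_2,\ldots,x_k)$ for $H$ and run it on $G$; an easy induction on $t$ shows that every vertex burning in $H$ at time $t$ is also burning in $G$ at time $t$, since $E(H)\subseteq E(G)$ only makes the fire spread faster. Now $P_m\square P_n$ is obtained from $C_m\square C_n$ by deleting the $m+n$ ``wrap-around'' edges while keeping every vertex, so $P_m\square P_n$ is a spanning subgraph of $C_m\square C_n$, and hence $b(C_m\square C_n)\le b(P_m\square P_n)$. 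Theorem~\ref{thm:mainGrid} then immediately gives the upper bounds $(1+o(1))(3/2)^{1/3}(mn)^{1/3}$ for $\sqrt n\ll m\le n$ and $O(\sqrt n)$ for $m=O(\sqrt n)$.

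For the lower bound in the range $\sqrt n\le m\le n$ I would simply rerun the volume argument from the proof of Theorem~\ref{thm:mainGrid}. That argument only used the ball-size estimate $|N(v,r)|\le 1+4+8+\cdots+4r=1+2r(r+1)$, and this bound survives verbatim in the torus: distances in $C_m\square C_n$ split as a sum of a $C_m$-distance and a $C_n$-distance, each cycle has at most two vertices at any given positive distance from a fixed vertex (and one at distance zero), so the shell $S(v,j)$ of vertices at distance exactly $j$ from $v$ still has at most $4j$ vertices for $j\ge 1$, and summing gives $|N(v,r)|\le 1+2r(r+1)$. Consequently the same function $f(k)=2k^3/3+k/3$ upper-bounds the number of burned vertices at time $k$, and the identical computation with $k_0=(3mn/2)^{1/3}-1$ yields $b(C_m\square C_n)\ge(3/2)^{1/3}(mn)^{1/3}-1$, which matches the upper bound up to the $(1+o(1))$ factor when $\sqrt n\ll m\le n$.

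For the lower bound when $m=O(\sqrt n)$ I would transfer the ``path on the border'' argument to a cycle. Fix one of the $C_n$-fibres of $C_m\square C_n$, i.e.\ the set $C=\{(a_0,j):j\in\mathbb{Z}_n\}$ for a fixed first coordinate $a_0$, which induces a copy of $C_n$. For any vertex $v$ of $C_m\square C_n$ and any $r\ge 0$, the vertices of $C$ lying in $N(v,r)$ are exactly the vertices of $C$ within $C_n$-distance at most $r$ of one fixed point of $C$, so $|N(v,r)\cap C|\le 2r+1$. Hence at time $\lfloor\sqrt n\rfloor$ at most $\sum_{r=0}^{\lfloor\sqrt n\rfloor-1}(2r+1)\le n$ vertices of $C$ are burned --- strictly fewer than $n$ except in degenerate cases --- so $b(C_m\square C_n)=\Omega(\sqrt n)$, and together with the upper bound this gives $\Theta(\sqrt n)$.

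There is essentially no hard step here: the corollary is a near-immediate consequence of Theorem~\ref{thm:mainGrid}. The only points that genuinely need a line of justification are (i) the spanning-subgraph monotonicity $b(G)\le b(H)$, (ii) the claim that the ball-volume bound and the cycle-intersection bound are unaffected by passing to the torus --- they are, because identifying points in a grid only merges vertices inside a ball and never creates new ones --- and (iii) the harmless boundary case, e.g.\ a perfect-square $n$ in the cycle-counting step, which perturbs only lower-order terms and is absorbed into the $\Theta(\cdot)$ and the $(1+o(1))$.
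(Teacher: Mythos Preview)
Your proof is correct and follows essentially the same route as the paper: the upper bound via the spanning-subgraph monotonicity $b(C_m\square C_n)\le b(P_m\square P_n)$, and the lower bound by observing that the ball-volume and fibre-intersection counts from the proof of Theorem~\ref{thm:mainGrid} transfer to the torus unchanged. The paper compresses all of this into two sentences; your more explicit justification is sound (one tiny slip: the vertices of the fibre $C$ in $N(v,r)$ lie within $C_n$-distance $r-d_{C_m}(a,a_0)$, not $r$, of the projected point, but this only strengthens your bound $|N(v,r)\cap C|\le 2r+1$).
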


\section{Proof of Theorem~\ref{thm:Path1}}\label{sec:drunkenness}

In this section we show our results on the cost of drunkenness for paths. 

\subsection{Proof of Theorem~\ref{thm:Path1}(i)}

We start with the proof of the upper bound for $b_1(P_n)$. Let $k=\sqrt{n(\log n / 2 + \omega)}$, where $\omega = \omega(n) = (\log n)^{2/3} = o(\log n)$ is a function tending to infinity as $n \to \infty$ sufficiently fast. We will use the first moment method to show that at time $k$, a.a.s.\ all vertices are burning. It will be convenient to think of covering the path $P_n$ with $k$ balls of radii $0, 1, \ldots, k-1$, where the radii are measured in terms of the graph distance, that is, the ball of radius $i$ around a vertex $v$ is $N(v,i)$. Partition $P_n$ into $\sqrt{n}$ subpaths, $p_1, p_2, \ldots, p_{\sqrt{n}}$, each of length $\sqrt{n}$. (For expressions such as $\sqrt{n}$ here that clearly have to be an integer, we round up or down but do not specify which: the choice of which does not affect the argument.) For $1 \le i \le \sqrt{n}$, let $X_i$ be the indicator random variable for the event that no ball contains the whole $p_i$. In other words, $X_i = 0$ if there exists $j \in [k]$ such that the ball centred at $x_j$ and of radius $j-1$ contains the path $p_i$; otherwise, $X_i = 1$. Let $X = \sum_{i=1}^{\sqrt{n}} X_i$. Clearly, if $X = 0$, then all vertices are burning at time $k$, and so our goal is to show that $\Prob(X \ge 1) = o(1)$.

First, we consider a subpath $p_i$ with $\sqrt{\log n} \leq i \leq \sqrt{n}-\sqrt{\log n}$. Note that all vertices of $p_i$ are at distance at least $(\sqrt{\log n}-1) \cdot \sqrt{n} > k$ from the endpoints of $P_n$. As a result, $p_i$ is sufficiently far from them to be affected by the boundary effect. It is clear that 
\begin{align*}
\E {X_i} & = \Prob (X_i = 1) = \prod_{j=\sqrt{n}/2}^k \left(1- \frac{2j-\sqrt{n}}{n}\right)\nonumber\\
& = \exp\left(- \left(1 + O\left(\frac{k}{n}\right)\right) \sum_{j=\sqrt{n}/2}^k \frac{2j-\sqrt{n}}{n} \right) \nonumber \\
& = \exp\left(- \left(1 + O\left(\frac{k}{n}\right)\right) \frac {(k-\sqrt{n}/2)^2}{n} \right) \nonumber \\
& = \exp\left(- \left(1 + O\left(\frac{\sqrt{n}}{k}\right)\right) \frac {k^2}{n} \right).
\end{align*}
Using the definition of $k$ and recalling that $\omega = (\log n)^{2/3}$, we get
\begin{align}
\E {X_i} = \Prob (X_i = 1) & = \exp\left(- \left(1 + O\left(\frac{1}{\sqrt{\log n}}\right)\right) \left( \frac 12 \log n + \omega \right) \right) \nonumber \\
& = \exp\left(- \frac 12 \log n - \omega + O(\sqrt{\log n}) \right) \nonumber \\
& \le \exp\left(- \frac 12 \log n - \frac 12 \omega \right). \label{eq6}
\end{align}

On the other hand for any $i<\sqrt{\log n}$ or $i> \sqrt{n}-\sqrt{\log n}$, $p_i$ is close to one of the endpoints of $P_n$, but one can nevertheless estimate the probability of $X_i=1$ as follows: 
\begin{align}
\E {X_i} = \Prob (X_i = 1) & \le \prod_{j=\sqrt{n}}^k \left(1- \frac{j}{n}\right)\nonumber\\
& = \exp\left(- \left(1 + O\left(\frac{k}{n}\right)\right) \sum_{j=\sqrt{n}}^k \frac{j}{n} \right) \nonumber \\
& = \exp\left(- \left(1 + O\left(\frac{k}{n}\right)\right) \frac {k^2\left( 1 + O(n/k^2)\right)}{2n} \right) \nonumber \\
& = \exp\left(- \left(1 + O\left(\frac{n}{k^2}\right)\right) \frac {k^2}{2n} \right) \nonumber \\
& \le \exp\left(- \frac 14 \log n - \frac 14 \omega \right). \label{eq7}
\end{align}

From~(\ref{eq6}) and~(\ref{eq7}) we conclude that,
\begin{eqnarray*}
\E {X} & \leq & \Big(\sqrt{n}-2\sqrt{\log n} \Big) \exp\left(- \frac 12 \log n - \frac 12 \omega \right)  + \ 2 \sqrt{\log n} \exp\left(- \frac 14 \log n - \frac 14 \omega \right) \\
&=& o(1).
\end{eqnarray*}
The upper bound holds a.a.s.\ by Markov's inequality.

\medskip

Now, we will show an asymptotically almost sure matching lower bound for $b_2(P_n)$. This will finish the proof, since $b_2 (P_n) \le b_1(P_n)$ (see~(\ref{eq:3b})). Let $k=\sqrt{n(\log n / 2 - \log \log n / 2 - \omega)}$, where now $\omega = \omega(n) = o(\log \log n)$ is any function tending to infinity as $n \to \infty$, arbitrarily slowly. This time, we will use the second moment method to show that at time $k$, a.a.s.\ at least one vertex is not burning. In fact, in order to avoid highly dependent events, we focus only on vertices that are at distance at least $2k$ from each other. 

For $1 \le i \le \ell := n / (2k) - 1 = \Theta(\sqrt{n/\log n})$, let $Y_i$ be the indicator random variable for the event that vertex $v_{2ki}$ is not burning at time $k$. Let $Y = \sum_{i=1}^{\ell} Y_i$. In particular, if $Y \ge 1$, then at least one vertex is not burning at time $k$, and so our goal is to show that $\Prob(Y = 0) = o(1)$. Recall that, since we investigate $b_2(P_n)$, at time $j$ of the process, only vertices not selected earlier have a chance to be selected as the next vertex $x_j$ to be burned. Recall also that the ball centred at $x_j$ will have radius $k-j$ at time $k$. Hence, for any $i \in [\ell]$, 
\begin{align*}
\Prob (Y_i = 1) & = \prod_{j=1}^k \left(1- \frac{2k-2j+1}{n-j+1} \right) \nonumber\\
& = \exp \left(- \left(1 + O \left( \frac{k}{n} \right) \right) \sum_{j=1}^k \frac{2k-2j+1}{n} \right)\nonumber\\
& = \exp\left(- \frac{k^2}{n}\right) \big(1 + o(1)\big) \nonumber \\
& = \exp\left(- \frac 12 \log n + \frac 12 \log \log n + \omega \right) \big(1 + o(1)\big),
\end{align*}
and so 
$$
\E{Y} = \ell \cdot \exp\left(- \frac 12 \log n + \frac 12 \log \log n + \omega \right) \big(1 + o(1)\big) = \Theta(e^{\omega}) \to \infty,
$$
as $n \to \infty$. Now, we estimate the variance of $Y$ as follows: 

\begin{align*}
\V [Y] & = \sum_{i,i'} \mathrm{Cov}{(Y_i, Y_i')} \nonumber\\
& \leq \sum_{i\neq i'}\left(\Prob(Y_i = Y_{i'}=1) - \big(\Prob(Y_i =1 )\big)^2\right) + \E{Y}.
\end{align*}
Let $i, i' \in [\ell]$ be such that $i \neq i'$. Since the vertices $v_{2ki}$ and $v_{2ki'}$ are far away from each other, by performing similar calculations as before, we get
\begin{align*}
\Prob (Y_i = Y_{i'}=1) & = \prod_{j=1}^k \left(1- 2 \cdot \frac{2k-2j+1}{n-j+1} \right) \nonumber\\
& = \exp\left(- \log n + \log \log n + 2 \omega \right) \big(1 + o(1)\big) \\
& = \big(\Prob(Y_i)\big)^2 \big(1 + o(1)\big).
\end{align*}
Therefore,
\begin{align*}
\V [Y] & \le O(\ell^2) \cdot o \Big( \exp\left(- \log n + \log \log n + 2 \omega \right) \Big) + \Theta(e^{\omega}) \\
& = o \Big( e^{2 \omega} \Big) = o \Big( \big(\E{Y} \big)^2\Big).
\end{align*}
The lower bound holds a.a.s.\ by Chebyshev's inequality.  $\hfill \square$

\bigskip

\subsection{Proof of Theorem~\ref{thm:Path1}(ii)}
Since $b_3(P_n) \ge b(P_n) = \Omega(\sqrt{n})$, we only need to show the matching upper bound. In order to warm up, we will show that a.a.s.
$$
b_3(P_n) = O( \sqrt{n} \log^* n),
$$ 
where $\log^* n$ denotes the \emph{iterated logarithm} of $n$, that is, the number of times the logarithm must be iteratively applied before the result is less than or equal to 1. After that, a simple trick will be enough to replace $\log^* n$ by a constant. 

Partition $P_n$ into $\sqrt{n}$ subpaths, $p_1, p_2, \ldots, p_{\sqrt{n}}$, each of length $\sqrt{n}$. We say that a given subpath is burning at some point of the process if some vertex of that subpath is burning. For $1 \le i \le \sqrt{n}$ and $t \in \N$, let $X_t(i)$ be the indicator random variable for the event that $p_i$ is \emph{not} burning at time $3 \sqrt{n} t - \sqrt{n}$. Let $X_t = \sum_{i=1}^{\sqrt{n}} X_t(i)$ be the random variable counting the number of subpaths \emph{not} burning at time $3 \sqrt{n} t - \sqrt{n}$. Finally, for $t \in \N \cup \{0 \}$, let $Y_t$ be the number of vertices \emph{not} burning at time $3\sqrt{n} t$. We say that we are in \emph{phase} $t$, if the number of time steps elapsed is in the set $\{3\sqrt{n}(t-1)+1, \ldots, 3 \sqrt{n}t\}$. Furthermore, in phase $t$, we say that we are in the first sub-phase, if the number of time steps elapsed is in the set $\{3\sqrt{n}(t-1)+1, \ldots, 2 \sqrt{n}t\}$, and in the second sub-phase, otherwise. Clearly, $Y_0=n$ (deterministically), since no vertex is burning at the beginning of the process. Note that $X_t$ is only defined for positive integers but, for convenience, we set $X_0 = \sqrt{n}$. Note also that for every $t \in \N \cup \{0\}$ we have 
$$
Y_t \le X_t \sqrt{n},
$$
since if at least one vertex of a subpath is burning, then after additional $\sqrt{n}$ steps the whole subpath is burning. We run the burning process and observe the sequence $X_1, Y_1, X_2, Y_2, \ldots$ ($X_1$ is determined at time $2\sqrt{n}$, $Y_1$ at time $3\sqrt{n}$, $X_2$ at time $5 \sqrt{n}$, and so on). Our goal is to get an upper bound for $X_{t+1}$ knowing $X_t$, which implies an upper bound for $Y_t$, as already mentioned above.

Note that the original random variables $X_{t+1}(i)$ are not independent. However, we are able to couple the original process with the following independent one. For each phase of length $3\sqrt{n}$ of the original  process, in the new process we do the following: in the first sub-phase of the phase, a vertex is chosen uniformly at random for burning, independently of the fact whether it is burnt or not, and no spreading takes place; during the last $\sqrt{n}$ steps of the phase, no new vertex is chosen for burning, only spreading takes place. Consider then the following coupling: if in the independent process (during the first sub-phase) a vertex not yet burnt in the original process is chosen for burning, the same vertex is also chosen for burning in the original process; otherwise, choose uniformly at random an unburnt vertex in the original model and burn it there. Spreading occurs deterministically in both processes (in case of the independent model it occurs only in the second sub-phase). Let us focus on the independent model for a moment. Note that, as in the original model, if a subpath $p_i$ is burning at the end of the first sub-phase, then it is completely burnt at the end of the second sub-phase (the end of a given phase). On the other hand, subpaths that are not burning at the end of the first sub-phase might or might not be burning at the end of the second sub-phase, depending whether they were adjacent to some burning path or not. For simplicity, at the end of the second sub-phase, only sub-paths that were burning at the end of the first sub-phase are (completely) burnt; that is, we change the status of all other vertices to unburnt so that no other subpath is burning. Sub-paths that are completely burnt by the end of the second sub-phase, are removed from the set of vertices in the independent model and not considered in following phases. Clearly, by the coupling, if a vertex is burnt in the independent model it is also burnt in the original model. This applies to every step of the process.

For $1 \le i \le \sqrt{n}$ and $t \in \N$, let $X'_t(i)$ be the indicator random variable for the event that $p_i$ is \emph{not} burning at time $3 \sqrt{n} t - \sqrt{n}$ in the independent process, and let $X'_t = \sum_{i=1}^{\sqrt{n}} X'_t(i)$ be the random variable counting the number of subpaths \emph{not} burning at time $3 \sqrt{n} t - \sqrt{n}$.  By the coupling, clearly $X_t(i) \leq X'_t(i)$ and $X_t \leq X'_t$. The distribution of $X'_t$, knowing $X'_{t-1}$, is easy to analyze. Suppose that $X'_{t-1}(i)=1$ for some $i$ (that is, $p_i$ is not burning at the beginning of phase $t$). Then,
\begin{eqnarray*}
p = p(X'_{t-1}) &:=& \Prob (X'_t(i) = 1) = \left( 1 - \frac {1}{X'_{t-1}} \right)^{2\sqrt{n}} \\
&=& \exp \left( - \frac {2\sqrt{n}}{X'_{t-1}} + O \left( \frac {\sqrt{n}} { (X'_{t-1})^2} \right) \right) = (1+o(1)) \exp \left( - \frac {2\sqrt{n}}{X'_{t-1}} \right),
\end{eqnarray*}
provided that $X'_{t-1} \gg n^{1/4}$. As events are independent, it follows that $X'_t$ is simply the binomial random variable $\text{Bin}(X'_{t-1}, p(X'_{t-1}))$.

We will show that the number of subpaths that are not burning decreases quickly, that is, we will show that for a relatively small value of $t$ we have $X'_t < 7 \sqrt{n} / \log n$. Suppose that for a given $t$ we have that $X'_t \ge 7 \sqrt{n} / \log n$. We get that 
$$
\E{X'_{t+1} ~~|~~ X'_t} = (1+o(1)) X'_t \exp \left( - \frac {2 \sqrt{n}}{X'_t} \right).
$$
We will now show that the probability that $X'_{t+1}$ is at least, say, $X'_t \exp \left( -\sqrt{n}/X'_t \right)$ is small (conditioning on the value of $X'_t$ and the fact that $X'_t \ge 7 \sqrt{n} / \log n$). In order to show it, we will use the following  version of Chernoff's bound (see e.g.~\cite[Theorem~2.1]{JLR}): if $X = \sum_{j=1}^{\ell} X_j$ is a sum of independent indicator random variables, each $X_j$ following a Bernoulli distribution with a (possibly) different probability of success, then for $\eps > 0$ we have 
\begin{equation*}
\Prob \Big( X-\E X \ge \eps \E X \Big) \le \exp \left( - \frac {\eps^2 \E X}{2+\eps} \right).
\end{equation*}
In particular, if $\eps \ge 1$, then
\begin{equation}\label{eq:chern3}
\Prob \Big( X-\E X \ge \eps \E X \Big) \le \exp \left( - \frac {\eps \E X}{3} \right).
\end{equation}
Noting that $X'_t \le \sqrt{n}$, we apply~(\ref{eq:chern3}) with 
$$
\eps = \frac { X'_t \exp(-\sqrt{n}/X'_t) - \E{X'_{t+1} ~~|~~ X'_t} }{ \E{X'_{t+1} ~~|~~ X'_t} } \ge (1+o(1)) \exp(\sqrt{n}/X'_t) - 1 \ge e +o(1) -1 \ge 1,
$$ 
to get that 
\begin{eqnarray*}
\Prob \Big( X'_{t+1} \ge X'_t \exp(-\sqrt{n}/X'_t) ~~|~~ X'_t \Big) &\le& \exp \Big( -\frac14 \ X'_t \exp(-\sqrt{n}/X'_t) \Big) \\
&\le& \exp \Big( -\frac 14 \cdot \frac {7 \sqrt{n}}{\log n} \exp( - \log n / 7) \Big) \\
&\le& \exp \Big( - n^{1/4} \Big),
\end{eqnarray*}
since $X'_t$ is assumed to be at least $7 \sqrt{n} / \log n$. 

Using this observation, our goal is to get (recursively) upper bounds for the sequence of random variables $X'_1, X'_2, \ldots$ as follows: 
\begin{eqnarray*}
X'_1 &\le& X'_0 / e = \sqrt{n} / e,\\
X'_2 &\le& X'_1 / \exp(\sqrt{n}/X'_1) \le \sqrt{n} / e^e,\\
\ldots &\le& \ldots \\
X'_{t+1} &\le& X'_{t} / \exp(\sqrt{n}/X'_t) \le \sqrt{n} / e\uparrow\uparrow t,
\end{eqnarray*}
provided that $X'_t \ge 7 \sqrt{n} / \log n$ ($e\uparrow\uparrow t$ denotes $t$-times iterated exponentiation, using Knuth's up-arrow notation). In other words, we condition on the fact that $X'_i \ge 7 \sqrt{n} / \log n$ (otherwise, we simply stop applying the argument) and $X'_i \le \sqrt{n} /  e\uparrow\uparrow i$, and estimate the probability that the desired bound for $X'_{i+1}$ fails. Since we apply the claim at most $\log^* (\log n / 7) \le \log^* n$ times and each time the claim fails with probability at most  $\exp ( - n^{1/4} ) = o(1/\log^* n)$, we get that a.a.s.\ after $T \le \log^* n$ rounds $X'_T \le 7 \sqrt{n} / \log n$. 

The rest of the proof is straightforward. Back in the original model, all but at most $Y_T \le X_T \sqrt{n} \le X'_T \sqrt{n} \le 7 n / \log n$ vertices are burning. Now, observe that it is enough to wait another $7 \sqrt{n}$ steps to see all subpaths burning a.a.s.: indeed, the probability that a given subpath is not burning at that point is at most
$$
\left( 1 - \frac {\sqrt{n}}{Y_T} \right)^{7 \sqrt{n}} \le \exp \left( - \frac {7n}{Y_T} \right) \le \frac {1}{n}.
$$ 
The expected number of subpaths that survived these additional $7\sqrt{n}$ steps is at most $X_T / n = o(1)$, and so the claim holds by Markov's inequality. After $\sqrt{n}$ more steps all vertices are burning (deterministically) and so a.a.s.\ the process ends in at most
$$
3\sqrt{n} \cdot T + 8\sqrt{n} = O(\sqrt{n} \log^* n)
$$
rounds in total.

\bigskip

Now, we are ready to modify the argument slightly to get an upper bound of $O(\sqrt{n})$ instead of $O(\sqrt{n} \log^* n)$. Before we used to have phases of equal length, namely, $3 \sqrt{n}$ steps, and we needed $O(\log^* n)$ phases to make sure that after $O(\sqrt{n} \log^* n)$ steps the number of vertices burning is small enough for the final argument to be applied. We can make the time intervals a bit shorter (from phase to phase): let now the length of phase $t$ to be only $4\sqrt{n} 2^{-t}$ steps, so that the total number of steps is still $O(\sqrt{n})$. We adjust the independent model as follows. Let $\ell=\ell(n)$ be the smallest integer such that $2^{\ell} \ge \sqrt{n}$; clearly, $\sqrt{n} \le 2^{\ell} < 2\sqrt{n}$. We start with subpaths of length $2^{\ell}$ instead of $\sqrt{n}$. The first (second) sub-phase of phase $t$ consists of $3 \sqrt{n} 2^{-t}$ ($\sqrt{n} 2^{-t}$, respectively) steps, instead of $2\sqrt{n}$ ($\sqrt{n}$, respectively). The only other difference is that at the end of each phase, we split each non-burning subpath into two; that is, at the end of phase $t$, each subpath has length $2^{\ell-t}$. As before, let $X'_t$ be the number of subpaths not burning at the end of the first sub-phase of phase $t$, now of length $2^{\ell-t}$, and note also that the number of unburnt vertices in the original model at the end of a phase is bounded from above by the number of unburnt vertices in the independent model. This time, we get $X'_0 = n/2^{\ell} \le \sqrt{n}$, and for each $t \in \N$, $X'_t \sim 2 \text{Bin}(X'_{t-1}, p(X'_{t-1}))$, where
\begin{eqnarray*}
p = p(X'_{t-1}) &:=& \left( 1 - \frac {1}{X'_{t-1}} \right)^{3 \sqrt{n} 2^{-t}} = (1+o(1)) \exp \left( - \frac {3 \sqrt{n}}{ 2^{t} X'_{t-1}} \right),
\end{eqnarray*}
provided that $2^t X'_{t-1} \gg n^{1/4}$.

Suppose that for a given $t$ we have $2^{\ell-t} X'_t \ge 7n / \log n$ (note that $2^{\ell-t} X'_t$ is the total number of unburnt vertices in the independent model) and $X'_t \le \sqrt{n} 2^{-t}$. We get that 
$$
\E {X'_{t+1} ~~|~~ X'_t} = (2+o(1)) X'_t \exp \left( - \frac {3 \sqrt{n}}{ 2^{t+1} X'_t} \right)
$$
and, applying Chernoff's bound~\eqref{eq:chern3} with 
\begin{eqnarray*}
\eps &=& \frac { X'_t \exp((-3\sqrt{n})/(2^{t+2}X'_t)) - \E{X'_{t+1}~~|~~ X'_t} }{ \E{X'_{t+1}~~|~~ X'_t}} \geq \left( \frac 12 + o(1) \right) \exp \left( \frac {3\sqrt{n}}{2^{t+2}X'_t} \right) -1 \\
&\geq& (1/2+o(1)) e^{3/4} - 1 \geq 1,
\end{eqnarray*}
we get
\begin{eqnarray*}
\Prob \Big( X'_{t+1} \ge X'_t \exp((-3\sqrt{n})/(2^{t+2}X'_t)) ~~|~~ X'_t \Big) &\le& \exp \Big( - \frac14 \ X'_t \exp((-3\sqrt{n})/(2^{t+2}X'_t)) \Big) \\
& \leq &  \exp \Big( - \frac14 \ \frac{7 n}{2^{\ell-t} \log n} \exp(- \frac {3 \sqrt{n}}{4 \cdot 2^{t}} \cdot \frac{2^{\ell-t} \log n}{7n}) \Big) \\
& \leq &  \exp \Big( - \frac{\sqrt{n}}{\log n} \exp \Big(- \frac{3}{14} 2^{-2t}\log n\Big) \Big) \\
&\le& \exp \Big( - n^{1/4} \Big).
\end{eqnarray*}
Thus, with probability at least $1-e^{-n^{1/4}}$, 
\begin{equation}\label{eq:bound}
X'_{t+1} < X'_t \exp((-3\sqrt{n})/(2^{t+2}X'_t)) \le \frac {X'_t}{2},
\end{equation}
provided that $7n 2^{t-\ell}/\log n \le X'_t \le \sqrt{n} 2^{-t}$, and which then also implies $X_{t+1} \le \sqrt{n} 2^{-(t+1)}$.  Note that the probability that the first inequality in~\eqref{eq:bound} does not hold is clearly $o(1/\log \log n)$, and there are  $O(\log \log n)$ phases (but still only $O(\sqrt{n})$ steps!), until the number of vertices burning is $O(n/\log n)$. Hence, a.a.s. after $T'=O(\log \log n)$ rounds we have $X'_{T'} \le 7\sqrt{n}/\log n$. The rest of the argument as before: we have, back in the original model, we have $Y_{T'} \leq X_{T'}2^{\ell-T'} \leq X'_{T'}2^{\ell-T'} \leq 7n/\log n$, and as before, a.a.s.\ we finish after $8\sqrt{n}$ additional steps. The proof is finished.$\hfill \square$



\begin{thebibliography}{99}

\bibitem{abbas} H.~Acan, A.~Collevecchio, A.~Mehrabian, and N.~Wormald, On the push \& pull protocol for rumour spreading, \emph{PODC 2015}, to appear.

\bibitem{bootstrap} J.~Alder, E.~Lev. Bootstrap percolation: visualizations and applications, \emph{Braz.\ J.\ Phys.}\ \textbf{33}, 2003, p. 641--644.

\bibitem{brush1} N.~Alon, P.~Pra\l{}at, and N.~Wormald, Cleaning regular graphs with brushes, \emph{SIAM Journal on Discrete Mathematics} \textbf{23} (2008), 233--250.

\bibitem{AlonSpencer}
N.~Alon, J.~Spencer, 
\newblock {\em The Probabilistic Method},
\newblock John Wiley and Sons, 3rd edition, 2008.

\bibitem{bol} B.\ Bollob\'{a}s. \emph{Random Graphs}, Cambridge University Press, Cambridge, 2001.

\bibitem{BJR} A.~Bonato, J.~Janssen, E.~Roshanbin, How to burn a graph, \emph{Internet Mathematics}, submitted.

\bibitem{Diaz}
J.~D\'{i}az, D.~Mitsche, G.~Perarnau, X.~P\'{e}rez-Gim\'{e}nez,
\newblock On the relation between graph distance and Euclidean distance of random geometric graphs,
\newblock Preprint, available at http://arxiv.org/abs/1404.4757.

\bibitem{fire3} S.~Finbow and G. MacGillivray, The firefighter problem: a survey of results, directions and questions, \emph{Australasian Journal of Combinatorics} \textbf{43} (2009), 57--77.


%
\bibitem{Goel05}
A.~Goel, S.~Rai and B.~Krishnamachari,
\newblock Sharp thresholds for monotone properties in random geometric graphs,
\newblock  {\em  Annals of Applied Probability}, 15, pp. 364--370, 2005.

\bibitem{JLR}  S.~Janson, T.~{\L}uczak, A.~Ruci\'nski. {\em Random graphs}, Wiley, New York, 2000.     

\bibitem{drunk1} A.~Kehagias, D.~Mitsche, and P.~Pra\l{}at, Cops and Invisible Robbers: the Cost of Drunkenness, \emph{Theoretical Computer Science} \textbf{481} (2013), 100--120.

\bibitem{drunk2} A.~Kehagias and P.~Pra\l{}at, Some Remarks on Cops and Drunk Robbers, \emph{Theoretical Computer Science} \textbf{463} (2012), 133--147.

\bibitem{Penrose97}
M.~Penrose,
\newblock The longest edge of the random minimal spanning tree,
\newblock  {\em  Annals of Applied Probability}, 7(2), pp. 340--361, 1997.

\bibitem{Penrose}
M.~Penrose,
\newblock {\em Random Geometric Graphs},
\newblock Oxford Studies in Probability. Oxford U.P., 2003.

\bibitem{brush3} P.~Pra\l{}at, Cleaning random $d$-regular graphs with Brooms, \emph{Graphs and Combinatorics} \textbf{27} (2011), 567--584.

\bibitem{brush2} P.~Pra\l{}at, Cleaning random graphs with brushes, \emph{Australasian Journal of Combinatorics} \textbf{43} (2009), 237--251.

\bibitem{fire1} P.~Pra\l{}at, Graphs with average degree smaller than 30/11 burn slowly, \emph{Graphs and Combinatorics} \textbf{30(2)} (2014), 455--470.

\bibitem{fire2} P.~Pra\l{}at, Sparse graphs are not flammable, \emph{SIAM Journal on Discrete Mathematics} \textbf{27(4)} (2013), 2157--2166.

\end{thebibliography}
\end{document}